\newtheorem{theorem}{Theorem}
\newtheorem{proop}{Proposition}
\newtheorem{prop}{Proposition}
\newtheorem{theom}{Theorem}
\newtheorem{lemma}{Lemma}
\newtheorem{defin}{Definition}
\newtheorem{probl}{Problem}
\theoremstyle{remark}
\newtheorem*{rem}{Remark}
\newenvironment{proof}{\noindent{\bf Proof:}}{$\hfill \Box$ \vspace{10pt}}  
\def\XXint#1#2#3{{\setbox0=\hbox{$#1{#2#3}{\int}$ }
\vcenter{\hbox{$#2#3$ }}\kern-.6\wd0}}
\newcommand{\hel} {
\hskip2.5pt{\vrule height7pt width.5pt depth0pt}
\hskip-.2pt\vbox{\hrule height.5pt width7pt depth0pt}
\, }
\newcommand{\restr}{\hel}
\newcommand{\vertiii}[1]{{\left\vert\kern-0.25ex\left\vert\kern-0.25ex\left\vert #1 
    \right\vert\kern-0.25ex\right\vert\kern-0.25ex\right\vert}}
\newcommand{\upin}{\rotatebox[origin=c]{95}{\(\in\)}}
\begin{document}

\title[Busemann--Hausdorff densities of dimension and codimension two]{On Busemann--Hausdorff densities of dimension two and of codimension two, with an application to the Plateau Problem}
\author{Ioann Vasilyev}
\address{CNRS-Laboratoire AGM-Universit\'e de Cergy-Pontoise, France.
\newline
\textit{On leave from: St.-Petersburg Department of V.A. Steklov Mathematical Institute, Russian Academy of Sciences (PDMI RAS), Fontanka 27, St.-Petersburg, 191023, Russia}}
\email{ioann.vasilyev@cyu.fr, milavas@mail.ru}
\subjclass[2010]{28A75, 49Q15, 49Q20, 28A78, 52A51}
\keywords{Plateau Problem, rectifiable chains, Hausdorff measures, finite dimensional Banach spaces.}

\begin{abstract}
The purpose of this paper is twofold.  First, we describe one (presumably) new case, in which Busemann--Hausdorff densities are convex. We apply the corresponding result to prove the existence of minimizing rectifiable chains of codimension two in complex finite dimensional normed vector spaces.  Second, we prove that for each $n\geq 4$, there exists an $n$ dimensional normed vector space 
in which the corresponding two  dimensional Busemann--Hausdroff density is not totally convex. 
 This gives 
 a negative answer to a question posed by 
 H. Busemann and E. Strauss, see~\cite{busemann2}.
\end{abstract}

\maketitle
\section{Introduction}
This paper mostly concerns geometry of finite dimensional normed vector spaces. We point out from the very beginning  that in each finite dimensional normed vector space $E$, discussed in the current paper, we fix an Auerbach basis and the corresponding dual basis in the dual space $E^*$. We shall sometimes call these bases canonical. Also, we fix in $E$ a Euclidean structure, which is a choice of a symmetric, positive definite, real valued quadratic form defined on $E$. Natural objects related to this Euclidean structure (such as: sets, norms, metrics, measures, etc.) will be referred to as Euclidean. 

Let $E$ be a finite dimensional normed vector space. Recall that, for a natural number $m$ satisfying $1\leq m\leq \dim(E)$, the Grassmann cone of dimension $m$ consists of all simple (also called decomposable) $m$ vectors and is denoted $G(m,E)$. The corresponding $m$'th exterior power is denoted $\Lambda_m(E)$; it naturally inherits the Euclidean norm $|\cdot|_2$ and the inner product $\langle \cdot,\cdot \rangle$ of the ambient space $E$. The $m$'th Grassmannian of $E$ is the set that consists of all $m$ dimensional linear subspaces in $E$ and is denoted $Gr(m,E)$. The elements of $Gr(m,E)$ will be called $m$ planes. For the sake of brevity, when $m=2$, we shall sometimes say  ``planes'' instead of ``two planes''. Given $\omega\in G(m,E)$ with $\omega=w_1\wedge \ldots\wedge w_m$ for some linearly independent vectors $w_1,\ldots, w_m$ in $E$, we shall denote by $\mathrm{span}(\omega)$ the corresponding element of $Gr(m,E)$, i.e. $\mathrm{span}(w_1,\ldots,w_m)$. 

	The following property of functions on Grassmann cones is classical. 
	 Let $E$ be an $n$ dimensional normed vector space and let $\phi$ be a function defined on the Grassmann cone  $G(m,E)$. We say that $\phi$ is extendibly convex if it is the restriction of a norm defined on $\Lambda_m(E)$.
Note that when $m$ is different from either $1$ or $n-1$, then the corresponding set $G(m,E)$ is not convex, which motivates the definition above. On the other hand, in the cases when $m=1$ or $m=n-1$, the notion of extendible convexity coincides with the usual convexity. Extendibly convex functions were extensively studied by H. Busemann and his school in the articles~\cite{busemann},~\cite{busemann1},~\cite{busemann2} and~\cite{phadke}.

We would like to remind the reader of two important types of functions defined on Grassmann cones.
	Let  $E$ be a finite dimensional normed vector space of dimension $n$ and let $1\leq m\leq n$. An $m$ density function (or simply an $m$ density) is a continuous function $\phi: G(m,E)\rightarrow \mathbb R_+$ that is positively homogeneous of degree one. An $m$ density $\phi$ is an $m$ volume density if for all $e\in G(m,E)$ one has $\phi(e)\geq 0$ with equality if and only if $e=0$.
	
\begin{rem}
	Let  $E$ and $m$ be as in the previous definition and let  $\phi$ be an $m$ volume density on $E$. Then, for an $m$ rectifiable subset (see~\cite{federer} for a definition and related notions) $S\subset X$ one defines the corresponding volume $\text{Vol}_{\phi}(S)$ by the following formula
$$\text{Vol}_{\phi}(S):=\int_{S}\phi(T_xS)d\mathcal H^m(x),$$
where $T_xS$ stands for the corresponding approximate tangent $m$ plane and $\mathcal H^m$ is the Euclidean Hausdorff measure. 	
\end{rem} 

Recall one classical volume density, since it will be very important for us in what follows.
\begin{defin}{(Busemann--Hausdorff density)}
\label{bhd}
	Let  $E$ be an $n$ dimensional normed vector space with the unit ball $B$. For each $1\leq m \leq n$ we define the $m$ Busemann--Hausdorff density  $\phi_{m,BH,B}: G(m,E)\rightarrow \mathbb R_+$ by the following formula
	$$\phi_{m,BH,B}(\cdot)=\frac{\mathbold{\alpha}(m) |\cdot|_2}{\mathcal H^m(B\cap \mathrm{span}(\cdot))},$$
	where 
$\mathbold{\alpha}(m):=\mathbold{\pi}^{m/2}/\Gamma(m/2+1)$.  We shall sometimes write $BH$ densities to save space. For the same reason, we shall also regularly omit the third subscript $B$, especially when the choice of the ambient space $E$ (and hence also of its unit ball $B$) is evident. 
	\end{defin}
	
	\begin{rem}
 It is well known that the volume, that corresponds to a Busemann--Hausdorff density coincides with the corresponding intrinsic Hausdorff measure in $E$, i.e. with the one induced by the norm of $E$. 
\end{rem}

\bigskip
	
	In $1949$, in his famous article~\cite{busemann}, H. Busemann proved the following result concerning the so-called cross-sections of convex bodies in finite dimensional normed vector spaces.
	\begin{theom}
	\label{thm3.1}
	Let  $E$ be an $n$ dimensional normed vector space with the unit ball $B$. Then the function  $\phi_{n-1,BH,B}$ is extendibly convex.
	\end{theom}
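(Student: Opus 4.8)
The plan is to reduce the statement to a classical fact of convex geometry — the convexity of the intersection body of a symmetric convex body — and then to recall how the latter is established. Since $m=n-1$, every element of $\Lambda_{n-1}(E)$ is a simple $(n-1)$-vector, so $G(n-1,E)=\Lambda_{n-1}(E)$ and ``extendibly convex'' means precisely ``equal to a norm on $\Lambda_{n-1}(E)$''. Using the fixed Euclidean structure and an orientation, the Hodge star $\star\colon \Lambda_{n-1}(E)\to E$ is a linear isomorphism with $|\star a|_2=|a|_2$, and for a simple $a$ the vector $\star a$ spans the Euclidean orthogonal complement of $\mathrm{span}(a)$. Writing $\xi=\star^{-1}(x)$ one therefore gets
$$\phi_{n-1,BH,B}(\xi)=\frac{\alpha_{n-1}\,|x|_2}{\mathcal H^{n-1}\big(B\cap x^{\perp}\big)},$$
where $x^{\perp}$ denotes the hyperplane orthogonal to $x$. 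The right-hand side is exactly $\alpha_{n-1}$ times the gauge function of the \emph{intersection body} $I(B):=\{x\in E:\ |x|_2\le \mathcal H^{n-1}(B\cap x^{\perp})\}$, whose radial function in a unit direction $u$ equals $\mathcal H^{n-1}(B\cap u^{\perp})$. Hence $\phi_{n-1,BH}$ is extendibly convex if and only if $I(B)$ is a symmetric convex body. (If $n=2$ then $I(B)$ is just $B$ rotated by a right angle and rescaled, so the claim is immediate; assume from now on $n\ge 3$.)

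To see that $I(B)$ is convex I would argue plane by plane: a positively $1$-homogeneous function on $E$ is convex if and only if its restriction to every $2$-plane through the origin is, so it suffices to fix a $2$-plane $P$ and show that $I(B)\cap P$ is convex. Split $E=P\oplus P^{\perp}$; for a unit vector $u\in P$ the slicing hyperplane satisfies $u^{\perp}=\mathbb R w\oplus P^{\perp}$ with $w\in P$, $w\perp u$, $|w|_2=1$, and Fubini's theorem gives
$$\mathcal H^{n-1}(B\cap u^{\perp})=\int_{\mathbb R}\mathcal H^{n-2}\big(B_{tw}\big)\,dt,\qquad B_q:=\{\ell\in P^{\perp}:\ q+\ell\in B\}.$$
By the Brunn--Minkowski inequality the function $q\mapsto \mathcal H^{n-2}(B_q)^{1/(n-2)}$ is concave on its convex support, and it is even because $B=-B$; call it $\psi$, so $\mathcal H^{n-2}(B_q)=\psi(q)^{k}$ with $k:=n-2\ge 1$. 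The whole statement thus comes down to the following two-dimensional assertion: \emph{if $\psi\ge 0$ is concave and even on a convex planar set and $k\ge 1$, then $w\mapsto \int_{\mathbb R}\psi(tw)^{k}\,dt$ is the radial function of a convex body in $P$}, equivalently its reciprocal, extended $1$-homogeneously, is subadditive. After a linear change of coordinates in $P$ one may take two orthogonal reference directions and reduce this to the convexity of $m\mapsto\big(\int_{\mathbb R}\psi(x,mx)^{k}\,dx\big)^{-1}$ in the slope $m$, from which the triangle inequality follows by comparing the value at $m=0$ with the asymptotics as $m\to\infty$.

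The main obstacle is precisely this last planar inequality: it is a Brunn--Minkowski–type statement (this is the content, and the title, of Busemann's original paper) and does not follow by formal manipulation — proving that $m\mapsto\big(\int_{\mathbb R}\psi(x,mx)^{k}\,dx\big)^{-1}$ is convex requires applying Brunn--Minkowski, or the Borell/Brascamp--Lieb concavity of marginals of $s$-concave functions, to an auxiliary convex body manufactured from $\psi$, with careful bookkeeping of the weights. Everything else — the Hodge-star dictionary, the reduction to $2$-planes, and the Fubini plus Brunn--Minkowski step yielding the $(n-2)$-concavity of the fibre-volume function — is routine. Alternatively, once the translation of the first paragraph is in place, one may simply invoke the theorem of~\cite{busemann} that $I(B)$ is convex whenever $B$ is a symmetric convex body.
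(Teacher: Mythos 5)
Your reduction is carried out correctly and is the standard dictionary: since every $(n-1)$-vector in an $n$-dimensional space is simple, extendible convexity of $\phi_{n-1,BH}$ means that it is a norm on $\Lambda_{n-1}(E)$; the Hodge star identifies this with the statement that the gauge of the intersection body $I(B)$ is a norm, i.e.\ that $I(B)$ is convex; homogeneity reduces this to two-dimensional sections; and Fubini plus Brunn--Minkowski gives the $\tfrac{1}{n-2}$-concavity of the fibre-volume function $\psi^{\,n-2}$. Note, for comparison, that the paper itself offers no proof of this statement at all: it is quoted as a classical theorem with a reference to \cite{busemann} (and to \cite{milman} for a stronger variant), so your first paragraph is already more than the paper provides.

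The genuine gap is exactly where you say it is, and saying it does not close it. The assertion that $m\mapsto\bigl(\int_{\mathbb R}\psi(x,mx)^{k}\,dx\bigr)^{-1}$ is convex (equivalently, that $w\mapsto\int_{\mathbb R}\psi(tw)^{k}\,dt$ is the radial function of a convex body when $\psi$ is even and concave) is not a routine consequence of concavity of $\psi$: it is the substance of Busemann's theorem, essentially the $p=1$ case of Ball's theorem on bodies generated by even log-concave functions, and it requires its own argument (Busemann's original geometric one, or the construction of an auxiliary convex body and an application of Brunn--Minkowski with the correct weights, which you describe only in outline). Your closing alternative --- ``invoke the theorem of \cite{busemann} that $I(B)$ is convex'' --- is circular as a proof, since after your own translation that convexity \emph{is} the statement being proved; it is acceptable only in the sense in which the paper itself treats the result, namely as a citation. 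To make the proposal a proof you must supply the planar inequality: for instance, prove that for even log-concave $f$ on $\mathbb R^2$ the quantity $\bigl(\int_0^\infty f(tw)\,dt\bigr)^{-1}$ is subadditive in $w$, e.g.\ by the classical normalization argument in which the two sections are reparametrized to have equal integrals and the section through the intermediate direction is bounded below via Brunn--Minkowski applied to the region under the graph, and also verify the endpoint comparison you allude to (the value on the vertical axis versus the asymptotic slope), without which convexity of the restriction to the line $x=1$ does not yet give a norm.
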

In his proof of Theorem~\ref{thm3.1}, H. Busemann benefited  much from the fact that $G(n-1,E)$ is a convex set. That makes the proof in~\cite{busemann} very geometrical. For a slightly different approach to Theorem~\ref{thm3.1} which gives a more general statement, see~\cite{milman}.

\begin{rem}
 It is maybe worth noting that the density $\phi_{1,BH}$ is obviously convex in any finite dimensional normed vector space.
\end{rem}

Up to the late $1950$s, much effort was made to prove analogues of Theorem~\ref{thm3.1} for codimensions different from $1$ and $n-1$, without substantial progress, however. That culminated in the following problem formulated by H. Busemann and C. Petty (see~\cite{busemann3}, Problem 10).

\begin{probl}
\label{problem}
For which $m$ different from $1$ and $n-1$ are $BH$ densities of a normed vector space of dimension $n$ extendibly convex ?
\end{probl}

It was  more than $50$ years later that a new result of this type appeared in the literature. We mean the following theorem proved 
in the article~\cite{buriv}.

\begin{theom}
	\label{m=2}
	Let  $E$ be a finite dimensional normed vector space with the unit ball $B$. Then the $BH$ density  $\phi_{2,BH,B}$ is extendibly convex.
	\end{theom}

It is interesting to remark that the proof of Theorem~\ref{m=2} in~\cite{buriv} is in a certain sense more analytical than geometrical. For an alternative proof of Theorem~\ref{m=2}, see~\cite{bernig}.

\bigskip

The following version of convexity of volume densities on Grassmann cones is strictly stronger than the extendible convexity. 
\begin{defin}{(Totally convex densities)}
	Let $E$ be a finite dimensional normed vector space of dimension $n$ and let $1\leq m\leq n$ be a natural number. A volume density $\phi$ on $G(m,E)$ is totally convex if for every $m$ dimensional linear subspace there exists a linear projection onto that subspace which does not increase the volume $\mathrm{Vol}_{\phi}$ corresponding to the density $\phi$.
		\end{defin}
		\begin{rem}
 Further in the text, we shall sometimes call such linear projections contractions.
\end{rem}

Indeed, it turns out that totally convex volume densities are extendibly convex but the reciprocal to this statement is false. Both these facts are explained in the article~\cite{busemann2}.

Euclidean nearest point projections have Lipschitz constant $1$. Thus, thanks to Eilenberg's inequality, Euclidean densities are totally convex. This argument, however, can not be generalized even to the case of the space $\ell^3_\infty$. Indeed, an easy computation shows that in this space, any linear projection onto the plane $\{(x_1,x_2,x_3):x_1+x_2+x_3=0\}$ has Lipschitz constant strictly bigger than $1$. However, a similar argument does show that one dimensional $BH$ densities $\phi_{1, BH}$ are totally convex in all finite dimensional normed vector spaces, thanks to the Hahn theorem.

Note that from the geometric measure theory and the calculus of variation points of view, the existence of area contractions is very useful in constructing compact supported solutions to the Plateau Problem, see~\cite{ambrsh}. Reader, interested in the Plateau Problem is welcomed to contact the articles~\cite{tdpchains},~\cite{tdpchains2}  
 and~\cite{tdpapprox}, as well as the book~\cite{federer}.

		There exists an equivalent way of defining totally convex densities. We mean the following result discussed in~\cite{alvarezthompson}.
		\begin{prop}
		\label{alvar}
		Let $E$ and $m$ be as above. A function $\phi$ on $G(m,E)$ is totally convex if and only if it is extendibly convex and moreover the following holds. If $\Phi:\Lambda_m(E)\rightarrow \mathbb R$ is a convex extension of $\phi$, then through every point of the unit sphere $\mathbb S=\{x\in \Lambda_m(E): \Phi(x)=1\}$ there passes a supporting hyperplane of the form $\xi=1,$ where $\xi$ a simple $m$ vector in $\Lambda_m(E^*)$.
		\end{prop}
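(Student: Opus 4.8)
The plan is to prove both implications of Proposition~\ref{alvar} by translating the geometric statement (existence of non-increasing linear projections) into the analytic statement about supporting hyperplanes of the dual unit sphere.

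First, for the ``only if'' direction, suppose $\phi$ is totally convex; in particular it is extendibly convex, so a convex extension $\Phi$ exists (we may take $\Phi$ to be the largest convex extension, which is a norm on $\Lambda_m(E)$ since $\phi$ is a volume density). Fix a point $v$ on the unit sphere $\mathbb S=\{x:\Phi(x)=1\}$. The idea is to reduce to the case where $v$ is simple: if $v$ is simple, say $v$ spans an $m$-plane $P$, then total convexity gives a linear projection $\pi:E\to P$ that does not increase $\mathrm{Vol}_\phi$. Such a $\pi$ induces a linear map $\Lambda_m\pi:\Lambda_m(E)\to\Lambda_m(P)\cong\mathbb R$, i.e.\ an element $\xi\in\Lambda_m(E^*)$ which is simple (being the $m$-th exterior power of the rank-$m$ map $\pi$, followed by the choice of orientation on $P$). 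The non-increasing property of $\pi$ on $m$-areas, when unwound through the definition $\mathrm{Vol}_\phi(S)=\int_S\phi(T_xS)\,d\mathcal H^m$ applied to small pieces of affine planes, says exactly that $\xi(w)\leq\Phi(w)$ for every simple $w$, hence, by convexity of $\Phi$ and linearity of $\xi$ together with a density/convex-combination argument on $G(m,E)$, that $\xi\leq\Phi$ on all of $\Lambda_m(E)$, while $\xi(v)=1$ because $\pi$ restricts to the identity on $P$. Thus $\{\xi=1\}$ is the desired supporting hyperplane at $v$. For a general (non-simple) $v\in\mathbb S$, one uses that $\mathbb S$ is the boundary of the convex body $\{\Phi\leq1\}$ whose extreme points, by Carath\'eodory-type reasoning on Grassmann cones, are (scalar multiples of) simple vectors; a supporting hyperplane at an extreme simple point that also contains $v$ on its boundary can be produced by a standard separation argument, or one notes that the cone over simple vectors is dense enough in $\partial\{\Phi\le1\}$ that the family of simple supporting functionals already touches every boundary point.

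For the converse, assume $\phi$ is extendibly convex with convex extension $\Phi$, and that through every point of $\mathbb S$ there is a supporting hyperplane $\{\xi=1\}$ with $\xi\in\Lambda_m(E^*)$ simple. Fix an $m$-plane $P\subset E$; pick an orientation and a unit simple $m$-vector $v_P$ spanning it, rescaled so $\Phi(v_P)=1$. By hypothesis there is a simple $\xi\in\Lambda_m(E^*)$ with $\xi\leq\Phi$ everywhere and $\xi(v_P)=1$. A simple $m$-covector of ``norm-one type'' on the plane it pairs with corresponds to a linear projection: decompose $\xi$ as a wedge of $m$ linear functionals whose common kernel is a codimension-$m$ subspace $N$ complementary to $P$ (complementary precisely because $\xi(v_P)\neq0$), and let $\pi:E\to P$ be the projection with kernel $N$ normalized so that $\Lambda_m\pi$ corresponds to $\xi$. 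Then for any $m$-rectifiable $S$, the area formula gives $\mathrm{Vol}_\phi(\pi(S))\le\int_S|\xi(T_xS)|\,d\mathcal H^m\le\int_S\Phi(T_xS)\,d\mathcal H^m=\mathrm{Vol}_\phi(S)$, where the middle inequality is the pointwise bound $\xi\leq\Phi$ applied to the (appropriately oriented) tangent $m$-vectors, and the first inequality is the standard fact that the $\phi$-volume of the image under a linear map is bounded by the integral of the Jacobian expressed via $\xi$. Hence $\pi$ does not increase $\mathrm{Vol}_\phi$, so $\phi$ is totally convex.

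The main obstacle I expect is the bookkeeping in the ``only if'' direction at non-simple boundary points of $\mathbb S$: one must be careful that the supporting functionals produced from projections onto $m$-planes — which a priori only control $\Phi$ along the Grassmann cone $G(m,E)$ — extend to genuine supporting hyperplanes of the full convex body $\{\Phi\le1\}\subset\Lambda_m(E)$, and that every boundary point (not just the simple ones) is reached. The clean way around this is to invoke that $\Phi$, being a norm with $\phi=\Phi|_{G(m,E)}$, is the support function of the polar body, and that the simple covectors $\xi$ arising as $\Lambda_m\pi$ for area-non-increasing $\pi$ already exhaust a dense subset of the extreme points of that polar body; a routine closure argument then covers all of $\mathbb S$. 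The remaining steps — the area formula computation and the dictionary between simple $m$-covectors, rank-$m$ projections, and complementary subspaces — are standard and I would treat them briefly.
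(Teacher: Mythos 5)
The paper does not actually prove Proposition~\ref{alvar}: it states it as a result ``discussed in'' \cite{alvarezthompson}, so your attempt can only be measured against the standard argument there. Your converse direction (simple supporting covector $\Rightarrow$ area\,-nonincreasing projection) is essentially that standard argument and is fine: from $\xi=f^1\wedge\cdots\wedge f^m$ with $\xi(v_P)=1$ one takes $N=\bigcap_i\ker f^i$, checks $N\cap P=\{0\}$, lets $\pi$ be the projection onto $P$ along $N$, and verifies $\Lambda_m\pi(w)=\xi(w)\,v_P$ (both sides lie in the one-dimensional space $\Lambda_m(\mathrm{ann}\,N)$ and agree on $v_P$ -- no ``normalization'' of $\pi$ is available or needed); the area estimate then goes through as you wrote it.

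The genuine gap is exactly where you flagged it, in the ``only if'' direction at non-simple points of $\mathbb S$, and none of your proposed fixes can work, because the claim itself fails there. Take the Euclidean $2$-density in $\mathbb R^4$ (which is totally convex, via orthogonal projections) and its largest convex extension, the mass norm $\hat\phi$. One checks $\hat\phi(e_1\wedge e_2+e_3\wedge e_4)=2$, so $x_0=(e_1\wedge e_2+e_3\wedge e_4)/2\in\mathbb S$; but any supporting covector at $x_0$ satisfies $\xi\le\hat\phi$, hence has comass $\le1$, and for a \emph{simple} $\xi$ comass equals the Euclidean norm $|\xi|_2$, so $\xi(x_0)\le|\xi|_2\,|x_0|_2=\sqrt2/2<1$: no simple supporting hyperplane passes through $x_0$. (Choosing a different extension does not help: for the Euclidean norm on $\Lambda_2$ the unique supporting functional in that direction is $(e^1\wedge e^2+e^3\wedge e^4)/\sqrt2$, again non-simple.) So ``simple supporting functionals touch every boundary point'' is false, the extreme points being simple does not transfer support to non-extreme boundary points, and no closure/density argument can bridge this. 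The equivalence that is actually true (and is what \cite{alvarezthompson} asserts) requires the simple supporting hyperplane only at the points of $\mathbb S$ lying on the Grassmann cone $G(m,E)$, and for the maximal extension $\hat\phi$ (for which your subadditivity argument $\xi\le\hat\phi$ off the cone is valid; for an arbitrary extension $\Phi\le\hat\phi$ it is not). With the statement read in that restricted form, your projection-induced covector $\xi=\Lambda_m\pi$ handles precisely the points required and your outline is correct; as literally stated, the forward direction cannot be completed.
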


In the literature, the total convexity is also sometimes refereed to as the Gromov compressing property.
 		
Note that it can be derived from Theorem~\ref{thm3.1} that  in the codimension one case, each $BH$ density $\phi_{n-1, BH}$ is totally convex, see~\cite{busemann1}. The author of the present paper together with T. De Pauw have  used this observation in their article~\cite{ccepi} to prove the existence of compactly supported codimension one minimal $G$ chains (i.e. solutions to the corresponding version of the Plateau Problem) in finite dimensional normed vector spaces. However,  in the two-dimensional case in~\cite{ccepi} we had to use a weighted average of several projections (which we call in~\cite{ccepi} a density contractor) instead of just one projection. This allowed us to prove the existence of minimal $G$ chains of dimension two (alas, not necessary compactly supported) in the same context. As we shall see in this paper, there are some situations where sole projections are not available and  one indeed \textit{has to} consider those density contractors.
	
In the view of the discussion above and of Theorem~\ref{m=2}, it is natural to pose the following question: ``Are two dimensional $BH$ densities always totally convex ?'' Some time ago, the author of the current paper has been asked this question independently by Professors T. De Pauw and E. Stepanov. 
 Note that the very same question had been already posed by H. Busemann and E. Strauss in~\cite{busemann2}. In more details, the authors in~\cite{busemann2} write that they do no know whether Busemann--Hausdorff densities are always totally convex in Minkowski spaces outside of dimension one and codimension one cases. In~\cite{busemann2}, by Minkowski spaces the authors mean  finite dimensional metric spaces.



\bigskip
We shall prove in the present paper that there are no two dimensional Hausdroff area contractions onto some planes in certain $n$ dimensional normed vector spaces for any $n\geq 4$. 
This means that there are two dimensional $BH$ densities that are not totally convex.  We shall give a detailed proof in the case when $n=4$, the general case will follow from the four dimensional case. 
\begin{rem}
From now on, $X$ will stand for a certain fixed four dimensional (real) vector space, in which we fix a euclidean structure. The vectors of a fixed orthonormal basis $\mathcal B$ in $X$ will be denoted $e_1,e_2,e_3,e_4$.
\end{rem}

To this end, we start by noting that the unit ball of the space $\ell_1^4$, is given by the formula
$$\mathcal O:=\{x_1e_1+ x_2e_2 + x_3e_3 + x_4e_4\in X : |x_1|+|x_2|+|x_3|+|x_4|\leq 1\}.$$
As the matter of fact, this set is nothing but the unit four dimensional cross-polytope, also called regular octahedron. However, it will be convenient for us to consider a set $\mathcal C$ which is a rotated version of the unit cross-polytope. The former set is convex and symmetric, it is defined by $\mathcal C=f(\mathcal O)$, where $f$ is the linear mapping given by the following orthogonal matrix

$$
M:=
\begin{pmatrix}
\frac{1}{\sqrt{2}} & 0 & \frac{1}{2} & \frac{1}{2} \\
0 & \frac{1}{\sqrt{2}} & \frac{-1}{2} & \frac{1}{2} \\
\frac{1}{\sqrt{2}} & 0 & \frac{-1}{2} & \frac{-1}{2} \\
0 & \frac{-1}{\sqrt{2}} & \frac{-1}{2} & \frac{1}{2}
\end{pmatrix}
.
$$

It is easy to check that the set $\mathcal C$ likewise can be  defined as follows
\begin{multline}
\label{osnovnoemnozhestvo}
\mathcal C=\biggl\{y_1e_1+ y_2e_2 + y_3e_3 + y_4e_4\in X : \\
\Bigl|\frac{y_1}{\sqrt{2}}+\frac{y_3}{\sqrt{2}}\Bigr|+\Bigl|\frac{y_2}{\sqrt{2}}-\frac{y_4}{\sqrt{2}}\Bigr|+\Bigl|\frac{y_1-y_2-y_3-y_4}{2}\Bigr|+\Bigl|\frac{y_1+y_2-y_3+y_4}{2}\Bigr|\leq 1\biggr\}.
\end{multline}

We fix in the space $X$ the norm $\|\ldots\|$ given by the Minkowski functional of the set $\mathcal C$. Moreover, $\mathcal H^2_{\|\ldots\|}$ will stand for the Hausdorff measure induced by the norm  $\|\ldots\|$. 
By $\mathcal H^2$ we shall mean the Euclidean Hausdorff measure of the space $X$, which we shall also sometimes call ``area''.

Here is the first main  result of this paper.

\begin{theorem}
\label{one}
In the space $X$, there is no linear contraction of the Hausdorff measure $\mathcal H^2_{\|\ldots\|}$ onto the two dimensional plane $\mathrm{span}(e_1,e_2)$.
\end{theorem}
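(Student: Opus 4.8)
The plan is to argue by contradiction: suppose $P:\mathbb R^4\to\mathrm{span}(e_1,e_2)$ is a linear projection (so $P$ restricted to $\mathrm{span}(e_1,e_2)$ is the identity) which does not increase $\mathcal H^2_{\|\ldots\|}$. The key point is that the Hausdorff area of a parallelogram spanned by two vectors $u,v$ in the normed plane $\mathrm{span}(u,v)$ is computed via the Busemann--Hausdorff density of Definition~\ref{bhd}: its $\|\ldots\|$-area equals $\alpha_2 |u\wedge v|_2/\mathcal H^2(\mathcal C\cap\mathrm{span}(u,v))$. So ``$P$ does not increase $\mathcal H^2_{\|\ldots\|}$'' translates, testing on the parallelogram spanned by $Pe_3=a_1e_1+a_2e_2$ and $Pe_4=b_1e_1+b_2e_2$ (and similarly on all parallelograms $\{e_i,e_j\}$), into a finite system of inequalities relating the Euclidean areas of cross-sections of $\mathcal C$ by coordinate-type $2$-planes to the Euclidean area of the cross-section $\mathcal C\cap\mathrm{span}(e_1,e_2)$, together with the entries of $P$. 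The first block of work is therefore to compute, using the explicit octahedral description of $\mathcal C$, the quantity $\mathcal H^2(\mathcal C\cap\Pi)$ for each relevant $2$-plane $\Pi$: the coordinate planes $\mathrm{span}(e_i,e_j)$ and the ``image'' planes spanned by $Pe_3,Pe_4$ inside $\mathrm{span}(e_1,e_2)$. Since $\mathcal C$ is a polytope, each such section is a polygon whose area is an elementary (piecewise-linear/rational) function of the coordinates of the plane, so this is routine but must be done carefully.

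Next I would write down the area-nonincreasing condition explicitly. For a linear map $P$ the image of the unit square (or of any fixed parallelogram) in a $2$-plane $\Pi$ is a parallelogram in $\mathrm{span}(e_1,e_2)$, and the ratio of $\|\ldots\|$-areas is $\dfrac{\phi_{2,BH}\big(P\text{-image}\big)}{\phi_{2,BH}\big(\text{original}\big)}$, which after cancellation of the $\alpha_2$ and the Euclidean Jacobians becomes $\dfrac{|\det(P|_\Pi)|\cdot\mathcal H^2(\mathcal C\cap\Pi)}{\mathcal H^2(\mathcal C\cap\mathrm{span}(e_1,e_2))}$ — wait, more precisely the $\|\ldots\|$-area in the target is $\alpha_2\,|Pu\wedge Pv|_2/\mathcal H^2(\mathcal C\cap\mathrm{span}(e_1,e_2))$ and in the source $\alpha_2\,|u\wedge v|_2/\mathcal H^2(\mathcal C\cap\Pi)$, so the requirement is $|Pu\wedge Pv|_2/|u\wedge v|_2\le \mathcal H^2(\mathcal C\cap\mathrm{span}(e_1,e_2))/\mathcal H^2(\mathcal C\cap\Pi)$ for every $2$-plane $\Pi=\mathrm{span}(u,v)$. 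I would apply this to the six coordinate planes; on $\mathrm{span}(e_1,e_2)$ it forces the $2\times 2$ block of $P$ acting there to be the identity (already known), and on the planes containing $e_3$ and/or $e_4$ it constrains the off-diagonal entries $a_i,b_i$ of $P$. I would also apply it to ``diagonal'' planes such as $\mathrm{span}(e_1+e_3,e_2)$, $\mathrm{span}(e_3+e_4,e_1)$, etc., chosen so that their $\mathcal C$-sections are large (giving the strongest constraints) while their $P$-images are still two-dimensional.

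The main obstacle — and the crux of the argument — will be to show that the resulting system of inequalities is \emph{inconsistent}: no choice of the four free parameters $a_1,a_2,b_1,b_2$ (the entries of $P$ off the $\mathrm{span}(e_1,e_2)$ block) can satisfy all of them simultaneously. The natural strategy is to isolate two well-chosen test planes $\Pi_1,\Pi_2$ whose combined constraints pull in opposite directions: one plane whose $\mathcal C$-section is \emph{small} relative to $\mathcal C\cap\mathrm{span}(e_1,e_2)$, forcing $|P|_{\Pi_1}|$ to be bounded below away from what a contraction would allow, played off against a plane forcing an incompatible upper bound. Concretely I expect the rotation $M$ was engineered precisely so that $\mathcal C\cap\mathrm{span}(e_1,e_2)$ is a relatively small (non-maximal) section while some of the tilted sections are large, so that a contraction onto $\mathrm{span}(e_1,e_2)$ would have to ``expand'' certain parallelograms; quantifying this via the explicit polygon areas and deriving the contradiction is the only genuinely delicate computation. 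I would close by noting that, by Proposition~\ref{alvar} and the discussion preceding it, the non-existence of such a linear contraction is exactly the statement that $\phi_{2,BH}$ on $X$ fails to be totally convex, which is what the subsequent development uses.
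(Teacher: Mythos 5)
Your setup coincides with the paper's: via Busemann's theorem the contraction hypothesis becomes $\mathcal H^2(\pi(A))/\mathcal H^2(\mathcal C\cap W_0)\le \mathcal H^2(A)/\mathcal H^2(\mathcal C\cap V)$ for every $2$-plane $V$, which is exactly your inequality $|Pu\wedge Pv|_2/|u\wedge v|_2\le \mathcal H^2(\mathcal C\cap W_0)/\mathcal H^2(\mathcal C\cap V)$. But the entire content of the theorem is the step you defer as ``the only genuinely delicate computation'': you never exhibit a mechanism by which the system in the four off-block parameters becomes inconsistent, so the proposal is a plan rather than a proof. Worse, the finite family of test planes you suggest (coordinate planes and rational diagonal planes such as $\mathrm{span}(e_1+e_3,e_2)$) is very unlikely to suffice. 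For instance, $\mathcal H^2(\mathcal C\cap\mathrm{span}(e_1,e_3))=\sqrt2$, so with $\pi e_3=a e_1+c e_2$ that plane only yields $|c|\le(12\sqrt2-16)/\sqrt2=12-8\sqrt2\approx 0.69$, and the other coordinate and diagonal planes give similarly weak order-one bounds on $a,b,c,d$; these are nowhere near strong enough, because the eventual contradiction has a razor-thin margin.

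What is missing is the two-step mechanism the paper uses. First, one tests planes infinitesimally tilted from $W_0$, e.g. $V_1=\mathrm{span}\bigl((e_1+\varepsilon e_3)/\sqrt{1+\varepsilon^2},(e_2+\varepsilon e_4)/\sqrt{1+\varepsilon^2}\bigr)$ and its sign and coordinate variants $V_2,\dots,V_8$: the section areas $\mathcal H^2(\mathcal C\cap V_i)$ exceed $\mathcal H^2(\mathcal C\cap W_0)=12\sqrt2-16$ only at order $\varepsilon^2$, while the Euclidean Jacobian of $\pi$ on such a plane is $1+\varepsilon(a+d)/\sqrt{1+\varepsilon^2}+O(\varepsilon^2)$ (and analogous first-order expressions for the other combinations); comparing first order against second order and letting $\varepsilon\to 0$ forces the off-block entries to vanish, i.e.\ $\pi$ must be the orthogonal projection (Propositions 1 and 2 of the paper). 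Second, one tests the orthogonal projection on the single ``far'' plane $V_9=\mathrm{span}(f(e_1),f(e_2))$, the image under the rotation $M$ of a coordinate plane of the octahedron, whose section has Euclidean area $2$, while the orthogonal projection onto $W_0$ has Jacobian $1/2$ there; since $1/2>(12\sqrt2-16)/2\approx 0.485$, inequality \eqref{main} fails. Both the exact vanishing obtained from the infinitesimal step and this specific choice of $V_9$ are essential precisely because the final margin is so small; without supplying this (or some other explicitly inconsistent subsystem of constraints), your argument does not reach a contradiction.
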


\begin{rem}
As we have already mentioned, in~\cite{busemann2}, the authors constructed a density that is extendibly convex, but fails to be totally convex. This density belongs to a class of so-called quadratic densities. However, in the very same paper~\cite{busemann2}, the authors formulate that if a $BH$ density of some Minkowski space is quadratic, then the space is Euclidean. So, their counterexample does not cover the result of our Theorem~\ref{one}. 
\end{rem}

Obviously, the space $X$  is neither uniformly smooth, nor uniformly convex. Nevertheless, we can always approximate  $X$ by such a regular space, according to Lemma 2.3.2 in~\cite{her}. This yields uniformly smooth and uniformly convex finite dimensional normed vector spaces  with non-totally convex two dimensional $BH$ densities.

Of course, a similar result to that of our Theorem~\ref{one} holds if in the construction of the space $X$,  the four dimensional cross-polytope is replaced by an $n$ dimensional one with any $n\geq 4$. However, based on Theorem~\ref{one}, we prefer to derive for all $n\geq 4$ easier examples of $n$ dimensional normed vector spaces, in which the density $\phi_{2,BH}$ is not totally convex. These examples are based on the Cartesian product of certain spaces. We shall comment on these examples once again in the last remark of this paper.

The restriction on the dimension $n\geq 4$ is dictated on the one hand by the fact that if the dimension of the ambient space less than or equal to three, then by the already discussed all $BH$ densities are  totally convex. On the other hand, Proposition~\ref{alvar} indicates that the larger $n$, the harder it is to construct an example of a totally convex but not extendibly convex two dimensional density. This, together with the discussion in the last paragraph justifies our choice to concentrate on the case $n=4$ and to prove the lack of total convexity even in this case.

\bigskip

In order to get some intuition of the proof of Theorem~\ref{one}, we describe it here briefly. We give ourselves a linear mapping in $X$ which we assume to be an area contraction onto the plane $\mathrm{span}(e_1,e_2)$. Since it is a linear projection, it has a matrix relative to the fixed orthonormal base of a very particular form depending only on certain four real coefficients ($a$,$b$,$c$ and $d$). We want to show firstly that the only possibility is that it is the orthogonal linear projection, that is to say that all four coefficients are zero. Secondly, we will show that this orthogonal linear projection itself is not a contraction. This is the aim of the proof: we will have found a standardized space and a plane on which there is no linear contraction.

To show that the coefficients $a$,$b$,$c$ and $d$  are all zero, we fix $\varepsilon>0$ and associate it with several planes which depend on this $\varepsilon$. They are all ``very close'' to $\mathrm{span}(e_1,e_2)$. We reduce everything to purely Euclidean calculations, in particular to  
\begin{itemize}
\item calculations of Jacobians (which are norms of exterior products of matrices in the orthogonal basis) and 
\item calculations of areas (or approximations) of polygons in Euclidean planes.
\end{itemize}
The symmetry of the initial norm (that is to say of its unit ball) intervenes in the calculations to the extent that pairs of planes above are ``symmetrical'' with respect to each other, which means that in matrices which are involved in the calculations, some of the four coefficients (but not all) change sign.
An asymptotic expansion to order $2$ in $\varepsilon$ of a certain auxiliary coefficient $\lambda$ will allow us to conclude that, since the fixed linear mapping is a contraction, all four mentioned above coefficients  are zero (and this will be a subtle calculation). As a consequence, this will yield that if $\pi$ is an area contraction onto $\mathrm{span}(e_1,e_2)$, then $\pi$ is necessarily the orthogonal projection. 

It will remain afterwords to show only that the orthogonal projection onto $\mathrm{span}(e_1,e_2)$ is not an area contraction. There, we will use yet another plane that in turn will be quite  ``far away'' from $\mathrm{span}(e_1,e_2)$.

\bigskip

Our second main result describes a (probably) new situation, where the extendible convexity holds. In order to state our second main result, recall the notion of complex normed spaces. We call a normed vector space $(Y,\pmb{\|}\ldots\pmb{\|})$ complex if  the corresponding norm satisfies $\pmb{\|}\lambda (\cdot)\pmb{\|}=\pmb{\|}(\cdot)\pmb{\|}$ for all complex numbers $\lambda$. In other words, complex normed vector spaces possess unit balls with many rotational symmetries.
 
We are now in position to formulate our second theorem.

\begin{theorem}
	\label{two}
	In complex normed vector spaces, real codimension two $BH$ densities are extendibly convex.
\end{theorem}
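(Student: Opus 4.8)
The plan is to reduce the real codimension two case in $\mathbb{C}^n$ (real dimension $2n$) to the classical Busemann Theorem~\ref{thm3.1} in complex dimension $n-1$, exploiting the circle of symmetries built into a complex norm. First I would fix a complex normed space $(\mathbb{C}^n,\pmb{\|}\ldots\pmb{\|})$ with unit ball $\mathbf B$, and recall that $BH$ densities are $2(n-1)$ volume density functions on the Grassmann cone $G(2n-2,\mathbb{C}^n)$, where $\mathbb{C}^n$ is viewed as a $2n$ dimensional real space. By Proposition~\ref{alvar}, extendible convexity of $\phi_{2n-2,BH}$ would follow once I exhibit, through each point of the relevant unit sphere, a supporting hyperplane defined by a simple $(2n-2)$ vector in the dual; equivalently, by total convexity it suffices to produce, for every real $(2n-2)$ dimensional subspace $V\subset \mathbb{C}^n$, a linear projection onto $V$ that does not increase $\mathrm{Vol}_{\phi_{2n-2,BH}}$.

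The key step is an averaging argument over the circle action. Every real hyperplane-pair situation in codimension two is awkward because $G(2n-2,\cdot)$ is not convex; but here is the gain: a real $(2n-2)$ dimensional subspace $V$ either is a complex hyperplane (complex dimension $n-1$) or it is not, and the "worst" and most symmetric case is the complex one. I would first treat complex subspaces $V = H$ of complex codimension one directly: restricting the complex norm to $H\cong\mathbb{C}^{n-1}$ gives again a complex norm, its unit ball is $\mathbf B\cap H$, and one checks that $\phi_{2n-2,BH,\mathbf B}$ restricted to the $2(n-1)$ planes inside $H$ agrees (up to the normalizing constant $\alpha_{2n-2}$) with a codimension-one $BH$ density after identifying $\Lambda_{2n-2}$ with $\Lambda_1$ of the quotient. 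Here Busemann's Theorem~\ref{thm3.1}, in the form "codimension one $BH$ densities are totally convex" quoted in the excerpt from~\cite{alvarezthompson}, supplies an area-non-increasing linear projection $P_H$ onto $H$ whose operator norm on exterior powers is controlled. The remaining, non-complex subspaces $V$ are handled by an averaging/symmetrization: writing $U_\theta$ for multiplication by $e^{i\theta}$, which is a $\pmb{\|}\ldots\pmb{\|}$ isometry and hence preserves both $\mathcal H^{2n-2}_{\pmb{\|}\ldots\pmb{\|}}$ and the Euclidean $\mathcal H^{2n-2}$, I would average a Busemann projection over the orbit $\{U_\theta V\}$ — or, more robustly, invoke that a convex extension $\Phi$ of $\phi_{2n-2,BH}$ can be taken $U_\theta$ invariant for all $\theta$ by replacing $\Phi$ with $\theta\mapsto\int \Phi(U_\theta^{\wedge}(\cdot))\,d\theta$ — and then apply Proposition~\ref{alvar}: the supporting hyperplane at a given point of $\mathbb S$, once averaged over the circle, can be chosen so that its defining $(2n-2)$ covector is simple, because invariance under the full circle action forces the supporting functional to be "complex linear" in the relevant slot, and complex linear codimension-two functionals are exactly pullbacks of codimension-one functionals under the complex quotient map, which are simple.

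The main obstacle I anticipate is the last assertion: that circle invariance of the supporting hyperplane actually upgrades its defining covector to a \emph{simple} $(2n-2)$ vector. A generic element of $\Lambda_{2n-2}(E^*)$ is not simple, and averaging a linear functional over a group action does not in general preserve the Grassmann cone; so I would need a structural lemma saying that a $U_\theta$ invariant linear functional on $\Lambda_{2n-2}(\mathbb{C}^n)$ that is supporting at a point of the $BH$ unit sphere must be decomposable, presumably by showing its kernel (a real hyperplane in $\Lambda_{2n-2}$) intersects the Grassmann cone in a complex hyperplane's exterior power. A secondary difficulty is checking the identification of restricted densities carefully, since the Euclidean structures on $H$ induced from $E$ and the one used to normalize $\phi_{2n-2,BH,\mathbf B\cap H}$ must be matched, and the constants $\alpha_m$ bookkept; and one must verify that averaging over $\theta$ genuinely does not increase volume rather than merely not increasing it on average, which is where the linearity of the projection and Jensen's inequality for the convex $\Phi$ enter. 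If the simplicity lemma proves too delicate, the fallback is to argue directly with projections: average a Busemann area contraction $P_{U_\theta V}$ over $\theta$ to obtain a single linear map $\bar P$ onto $V$, and show via the triangle inequality on exterior powers together with the invariance of $\mathcal H^{2n-2}_{\pmb{\|}\ldots\pmb{\|}}$ that $\bar P$ is still area non-increasing — this sidesteps simplicity entirely and directly verifies total convexity, hence extendible convexity.
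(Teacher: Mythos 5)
Your proposal has genuine gaps, and it misses the two ingredients the paper's argument actually rests on. First, your reduction of complex hyperplane sections to Busemann's Theorem~\ref{thm3.1} conflates real and complex codimension one: Busemann's theorem (and the total convexity statement quoted from~\cite{alvarezthompson}) supplies projections onto subspaces of real dimension $2n-1$, whereas a complex hyperplane $H$ has real dimension $2n-2$, so no ``area-non-increasing projection $P_H$ onto $H$'' comes out of Theorem~\ref{thm3.1}. What is really needed is a \emph{complex} analogue of Busemann's theorem, and that is a nontrivial result of Koldobsky, Paouris and Zymonopoulou~\cite{kold} which the paper invokes: there is a convex symmetric body $K$ with $\mathcal H^{2k-2}(B\cap W)=\mathcal H^{2}(K\cap W^{\perp})$ for the relevant subspaces $W$; the Hodge star then identifies $\phi_{2k-2,BH,B}$ with $\phi_{2,BH,K}\circ\ast$, and the Burago--Ivanov Theorem~\ref{m=2} --- which your plan never uses --- gives extendible convexity of the latter, the linearity of $\ast$ transporting it back. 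Second, your key ``structural lemma'' (that a circle-invariant supporting functional at a point of the $BH$ unit sphere must be decomposable) is exactly the hard point and is left unproved: circle invariance only constrains the functional along directions coming from complex subspaces, while extendible convexity is a statement about all of $\Lambda_{2n-2}(\mathbb R^{2n})$, in which the complex-hyperplane directions form a thin subset, so there is no reason the averaged covector is simple.

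The fallback route also fails. For a non-complex $V$ the subspaces $U_\theta V$ are genuinely distinct, so the average $\bar P$ of the maps $P_{U_\theta V}$ is in general not a projection onto $V$ at all; moreover $(2n-2)$-dimensional area transforms through $\Lambda_{2n-2}\bar P$, and $P\mapsto\Lambda_{2n-2}P$ is not affine, so Jensen's inequality does not convert ``each $P_{U_\theta V}$ is area non-increasing'' into ``$\bar P$ is area non-increasing''. Note finally that you aim at total convexity, which is strictly stronger than the extendible convexity asserted in Theorem~\ref{two}; neither the paper nor~\cite{kold} gives any indication that total convexity holds in this setting, and Theorem~\ref{one} shows that such strengthenings can genuinely fail outside the classical cases, so the strategy should target extendible convexity directly, as the paper does.
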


We also have, in the notations of the article~\cite{ccepi}
 the following existence result for the Plateau Problem. 
\begin{theorem}
	\label{trii}
	Assume that: $(Y,\pmb{\|}\ldots\pmb{\|})$ is a complex normed vector space of dimension $2k$ for some natural $k\geq 2$, $(G,|\cdot|)$ is an Abelian normed locally compact White group,  $B\in \mathcal R_{2k-3}(Y,G)$ and $\partial B = 0$. It follows that the Plateau Problem
\begin{equation}
\label{plateau}
\begin{cases}
\text{minimize }  \mathit{M}_H(T) \\
\text{among all } T\in \mathcal R_{2k-2}(Y,G) \text{ such that } \partial T=B,
\end{cases}
\end{equation}
admits solutions.
\end{theorem}
Recall that $\mathcal R_{2k-3}(Y,G)$ and $\mathcal R_{2k-2}(Y,G)$ stand for the groups of rectifiable chains with coefficients in the group $G$ of dimensions $2k-3$ and $2k-2$ correspondingly and $\mathit{M}_H$ is the Hausdorff mass.

Both Theorems~\ref{two} and~\ref{trii} follow from Theorem~\ref{m=2}, a result that generalizes Theorem~\ref{thm3.1} for complex normed vector spaces,  and certain properties of the Hodge star operator. Note that in general Theorem~\ref{trii} does not follow from an extendtible convexity result. However, Theorem~\ref{trii} does follow from Theorem~\ref{two} in the cases where $G=\mathbb R$ and $G=\mathbb Z_2$ as explained in~\cite{buriv}.
\bigskip

Before proceeding to the proofs of our main results, let us discuss some problems that we leave open. First of all, we still do not know whether there are always compact solutions to the Plateau Problem in finite dimensional normed vector spaces in the context of $G$ chains of arbitrary dimension. We would like to stress that this question was discussed in~\cite{ambrsh} by L. Ambrosio and T. Schmidt. Second of all, it would be interesting to find out in which finite dimensional normed vector spaces the density $\phi_{2,BH}$ is totally convex. Third of all, we hope to generalize our Theorem~\ref{two} to all finite dimensional normed vector spaces. 
 Yet another problem to solve is to rule out (or to prove ?) the existence of \textit{nonlinear} contractions of the Hausdorff measure onto two dimensional planes. Let us finally mention once again   Problem~\ref{problem}. This question is at present far from being solved and is very intriguing. We conjecture the answer to Problem~\ref{problem} be negative and our hope is that the calculations, carried out in the proof of our first main result might be helpful to construct the desired counterexample. The author of the current paper plans to work on the first three of the aforementioned questions in the nearest future.

The rest of the paper is organized as follows. The second section is entirely devoted to the proof of Theorem~\ref{one}. The third section contains the proof of Theorems~\ref{two} and~\ref{trii}. In our fourth section which is the Appendix we give examples of spaces with non-totally convex $BH$ densities of any dimension greater than or equal to four.   

\subsection*{Acknowledgments}
The author is grateful to Thierry De Pauw and to Laurent Moonens for  helpful discussions. 

\section{Proof of Theorem~\ref{one}.}

\begin{proof}
First of all, note that any linear projection $\pi:X\rightarrow X$ onto $W_0:=\mathrm{span}(e_1,e_2)$ is given by a matrix of the following type
\begin{equation}
\begin{pmatrix}
\label{proj}
1 & 0 & a & b \\
0 & 1 & c & d \\
0 & 0 & 0 & 0 \\
0 & 0 & 0 & 0
\end{pmatrix}
,
\end{equation}
where $a,b,c,d$ are real numbers. Indeed, the elements in the last two lines of the matrix must be zeros, because the range of $\pi$ equals $W_0$ and in the left up corner there must be the $2\times 2$ identity submatrix, since $\pi$ is a projection.  

We shall prove the theorem by contradiction. Suppose, there exists a linear projection $\pi$ onto $W_0$, such that for any $V\in G(2,X)$ and for any two dimensional Euclidean disc $A$ in $V$ holds
\begin{equation}
\label{beforemain}
 \mathcal H^2_{\|\ldots\|}(\pi(A))\leq \mathcal H^2_{\|\ldots\|}(A).
\end{equation}
According to Busemann's theorem, see~\cite{thompson}, Theorem 7.3.5, we see that the inequality~\eqref{beforemain} reads

\begin{equation}
\label{main}
\frac{\mathcal H^2(\pi(A))}{\mathcal H^2(\mathcal C\cap W_0)}\leq \frac{\mathcal H^2(A)}{\mathcal H^2(\mathcal C\cap V)}.
\end{equation}

In order to reach a contradiction we shall further apply the inequality~\eqref{main} to nine specific planes $V_1,\ldots,V_9$, to be disclosed in the rest of this section. In order to get some intuition for this, we remark that the first eight of these planes will be infinitesimal perturbations of the plane $W_0$ and the ninth plane will be rather ``far away'' from $W_0$.

So, we fix an $\varepsilon>0$ and we first choose 
$$
V_1:= \mathrm{span}\biggl(\frac{e_1+\varepsilon e_3}{\sqrt{1+\varepsilon^2}},\frac{e_2+\varepsilon e_4}{\sqrt{1+\varepsilon^2}}\biggr) \text{ and } \; V_2:= \mathrm{span}\left(\frac{e_1-\varepsilon e_3}{\sqrt{1+\varepsilon^2}},\frac{e_2-\varepsilon e_4}{\sqrt{1+\varepsilon^2}}\right).
$$
Maybe, it is worth noting that the pairs of vectors 
$$\left(\frac{e_1+ \varepsilon e_3}{\sqrt{1+\varepsilon^2}},\frac{e_2+ \varepsilon e_4}{\sqrt{1+\varepsilon^2}}\right) \text{ and } \left(\frac{e_1- \varepsilon e_3}{\sqrt{1+\varepsilon^2}},\frac{e_2- \varepsilon e_4}{\sqrt{1+\varepsilon^2}}\right)$$ 
form orthonormal bases of the planes $V_1$ and $V_2$ correspondingly. Let $(f_1,f_2)$ denote the following orthonormal basis of $V_1$
$$
f_1:=\frac{(e_1+\varepsilon e_3)}{\sqrt{1+\varepsilon^2}} \; \text{ and } \; f_2:=\frac{(e_2+\varepsilon e_4)}{\sqrt{1+\varepsilon^2}}.$$ Next, we approximately calculate areas of the intersections of the set $\mathcal C$ with $V_1$ and $V_2$.

\begin{lemma}
\label{lem1}
 The following asymptotic relations hold, provided that positive $\varepsilon$ is small enough
 \begin{equation*}
   \mathcal H^2(\mathcal C\cap V_1)=\mathcal H^2(\mathcal C\cap V_2)\geq 12\sqrt{2}-16 + (272\sqrt{2}-384)\varepsilon^2 + o(\varepsilon^2).
 \end{equation*}
Moreover, holds
\begin{equation*}
 \mathcal H^2(\mathcal C\cap W_0)=\frac{8}{4+3\sqrt{2}}=12\sqrt{2}-16.
\end{equation*}
\end{lemma}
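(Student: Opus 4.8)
The plan is to pass to orthonormal coordinates on $V_1$ (resp. $V_2$) and reduce the area computation to that of an explicit planar polygon. Writing $\delta=1/\sqrt{1+\varepsilon^2}$, the point $s\,\frac{e_1+\varepsilon e_3}{\sqrt{1+\varepsilon^2}}+t\,\frac{e_2+\varepsilon e_4}{\sqrt{1+\varepsilon^2}}$ of $V_1$ has $\mathbb R^4$-coordinates $\delta(s,t,\varepsilon s,\varepsilon t)$, and since the chosen basis is orthonormal, $\mathcal H^2$ restricted to $V_1$ is simply $ds\,dt$. Substituting these coordinates into the explicit inequality defining $\mathcal C$ and writing $a=1+\varepsilon$, $b=1-\varepsilon$, the four summands become $\frac{\delta a}{\sqrt2}|s|$, $\frac{\delta b}{\sqrt2}|t|$, $\frac{\delta}{2}|bs-at|$, $\frac{\delta}{2}|bs+at|$; the key simplification is that the last two collapse via $|bs-at|+|bs+at|=2\max(b|s|,a|t|)$. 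Thus $\mathcal C\cap V_1$ is the sublevel set
\[
\tfrac{a}{\sqrt2}|s|+\tfrac{b}{\sqrt2}|t|+\max\bigl(b|s|,a|t|\bigr)\le \sqrt{1+\varepsilon^2},
\]
a bounded convex piecewise-linear region, symmetric under $s\mapsto-s$ and under $t\mapsto-t$.

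Next I would restrict to the first quadrant, where the region is cut by one line on the sector $\{at\le bs\}$ and by another line on the sector $\{at\ge bs\}$; the two lines meet the ray $at=bs$ at a common boundary point, so the region there is a quadrilateral with vertices $O=(0,0)$, a point $(S_0,0)$ on the $s$-axis, the point $(s^*,t^*)$ on the ray $at=bs$, and a point $(0,T_0)$ on the $t$-axis, all given by elementary closed-form expressions in $a,b,\delta$. For $\varepsilon$ small this quadrilateral is genuinely convex with no extra vertices, by continuity from $\varepsilon=0$, where it reproduces exactly the vertices $(0,0),(2-\sqrt2,0),(\sqrt2-1,\sqrt2-1),(0,2-\sqrt2)$ of $\mathcal C\cap W_0$. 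Applying the shoelace formula and multiplying by $4$, the total area simplifies to
\[
\mathcal H^2(\mathcal C\cap V_1)=\frac{4(1+\varepsilon^2)\,(a^2+b^2+2\sqrt2\,ab)}{(a^2+b^2+\sqrt2\,ab)\,\bigl(3ab+\sqrt2(a^2+b^2)\bigr)}.
\]
This expression is symmetric in $a\leftrightarrow b$, i.e. under $\varepsilon\mapsto-\varepsilon$, which is precisely the interchange of $V_1$ and $V_2$; hence $\mathcal H^2(\mathcal C\cap V_1)=\mathcal H^2(\mathcal C\cap V_2)$. Setting $\varepsilon=0$ (so $a=b=1$) gives $\mathcal H^2(\mathcal C\cap W_0)=\frac{8}{4+3\sqrt2}=12\sqrt2-16$, and a second-order Taylor expansion of the displayed fraction in powers of $\varepsilon^2$ (using $a^2+b^2=2+2\varepsilon^2$, $ab=1-\varepsilon^2$, $1/\delta^2=1+\varepsilon^2$, and rationalizing the resulting surds) yields $12\sqrt2-16+(272\sqrt2-384)\varepsilon^2+o(\varepsilon^2)$; in fact an equality holds, so the stated lower bound follows a fortiori.

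The routine but somewhat delicate part is the bookkeeping: pinning down the three nontrivial vertices of the quadrilateral and verifying that for small $\varepsilon>0$ the region is exactly this quadrilateral (no vertex created or destroyed), then carrying the second-order expansion through the nested radicals without arithmetic slips. As a safeguard I would compute $\mathcal H^2(\mathcal C\cap W_0)$ independently and directly from $\frac{|y_1|}{\sqrt2}+\frac{|y_2|}{\sqrt2}+\frac{|y_1-y_2|}{2}+\frac{|y_1+y_2|}{2}\le1$ (which, via the same $\max$-identity, gives the same quadrilateral and the value $12\sqrt2-16$), confirming the leading term of the asymptotic expansion.
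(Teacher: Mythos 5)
Your proposal is correct and follows essentially the same route as the paper: pass to orthonormal coordinates on $V_1$, identify the three nontrivial boundary points on the axes and on the ray $at=bs$, compute the quadrilateral area, and Taylor-expand in $\varepsilon$ (your closed form $\frac{4(1+\varepsilon^2)(a^2+b^2+2\sqrt2\,ab)}{(a^2+b^2+\sqrt2\,ab)(3ab+\sqrt2(a^2+b^2))}$ and its expansion $12\sqrt2-16+(272\sqrt2-384)\varepsilon^2+o(\varepsilon^2)$ check out). The only refinement beyond the paper is the identity $|bs-at|+|bs+at|=2\max(b|s|,a|t|)$, which shows the cross-section is exactly four copies of the quadrilateral and upgrades the paper's lower bound (obtained there by containment and convexity) to an equality, which is not needed for the lemma.
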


\begin{proof} A point that belongs to $V_1$ has the following coordinates in the basis $\mathcal B$
$$\left(\frac{x}{\sqrt{1+\varepsilon^2}},\frac{y}{\sqrt{1+\varepsilon^2}},\frac{\varepsilon x}{\sqrt{1+\varepsilon^2}},\frac{\varepsilon y}{\sqrt{1+\varepsilon^2}}\right),$$
where $x$ and $y$ are some real numbers. Recalling the definition of the set $\mathcal C$, we readily see that the coordinates of a point $xf_1+yf_2$ in $\mathcal C\cap V_1$ should fulfill the following inequality
\begin{multline}
 \label{vosmiug1}
  \frac{|x|(1+\varepsilon)}{\sqrt{2}\sqrt{1+\varepsilon^2}}+\frac{|y|(1-\varepsilon)}{\sqrt{2}\sqrt{1+\varepsilon^2}}+\frac{|x(1-\varepsilon)- y(1+\varepsilon)|}{2\sqrt{1+\varepsilon^2}}+ \frac{|x(1-\varepsilon)+ y(1+\varepsilon)|}{2\sqrt{1+\varepsilon^2}} \\ \leq 1.
\end{multline}

In other words, $\mathcal C\cap V_1$ is a planar set that consists of points with coordinates satisfying the inequality~\eqref{vosmiug1}. It is not difficult to prove that for $\varepsilon$ small enough, geometrically, the set $\mathcal C\cap V_1$ is an octagon.  However we shall not use this fact later on.  What we shall use, and what is obvious is that $\mathcal C\cap V_1$ symmetric with respect to the horizontal and the vertical axes. Let $S_1$ denote the area of the set $\mathcal C\cap V_1$. We want to minorize $S_1$, i.e. to find a lower bound on this quantity. To this end, we intersect $\mathcal C\cap V_1$ with the rays that are the halves of the planar lines defined by the equations $x=0$, $y=0$ and $x(1-\varepsilon)=y(1+\varepsilon)$ that belong to the first quadrant. In a moment, we shall find the intercepts that these rays have on the boundary of the set $\mathcal C\cap V_1$.

Thanks to the symmetries discussed in the previous paragraph we see that in order to estimate from below the area $S_1$, it suffices first to minorize the area of the intersection of $\mathcal C\cap V_1$ 
with the first quadrant and then to multiply the latter area by $4$. This intersection contains the quadrilateral $Q$ with vertices at the origin and at points $y_0f_2$, $x_0f_1$ and $x_1f_1+y_1f_2$, where $x_0, y_0, x_1$ and $y_1$ are positive numbers, to be calculated in a moment. Note that the last three of these four points are the intercepts discussed just above and that the area of $Q$ equals $(x_0y_1/2+x_1y_0/2)$.

Let us now express the numbers $x_0, y_0, x_1, y_1$ in terms of $\varepsilon$. Obviously, $x_0$ is the positive solution of the  equation
$$\sqrt{2}|x_0|(1+\varepsilon)+|x_0|(1-\varepsilon)+|x_0|(1-\varepsilon)=2\sqrt{1+\varepsilon^2},$$
from where  we get the following formula
$$x_0=\frac{2\sqrt{1+\varepsilon^2}}{2+\sqrt{2}-\varepsilon(2-\sqrt{2})}.$$ 
Analogously, we have that 
$$y_0=\frac{2\sqrt{1+\varepsilon^2}}{2+\sqrt{2}+\varepsilon(2-\sqrt{2})}.$$

We further use the formula $x_1(1-\varepsilon)=y_1(1+\varepsilon)$ in order to calculate  the value $|y_1|$:
$$
|y_1|\biggl(\frac{\sqrt{2}(1+\varepsilon)^2}{1-\varepsilon}+\sqrt{2}(1-\varepsilon)+2(1+\varepsilon)\biggr)=2\sqrt{1+\varepsilon^2}.
$$
This readily allows us to derive the following expressions for the coordinates $x_1$ and $y_1$ 
$$
x_1=\frac{(1+\varepsilon)\sqrt{2}}{1+\sqrt{2}-\varepsilon^2 + \sqrt{2}\varepsilon^2}, \;  \; y_1=\frac{(1-\varepsilon)\sqrt{2}}{1+\sqrt{2}-\varepsilon^2 + \sqrt{2}\varepsilon^2}.
$$

We are now in position to establish an asymptotic inequality for the area $S_1$. We already know that $S_1\geq 2(x_0y_1+x_1y_0)$. This means that
\begin{equation} 
	\label{tvznak}
	\begin{split}
  S_1&\geq4\Bigl(\frac{x_0y_1}{2}+\frac{x_1y_0}{2}\Bigr)\\
&\geq\frac{4(1+\varepsilon^2)}{1+\sqrt{2}-\varepsilon^2+\sqrt{2}\varepsilon^2}\cdot\biggl(\frac{1-\varepsilon}{2+\sqrt{2}-\varepsilon(2-\sqrt{2})}+\frac{1+\varepsilon}{2+\sqrt{2}+\varepsilon(2-\sqrt{2})}\biggr)  \\
&=12\sqrt{2}-16+(272\sqrt{2}-384)\varepsilon^2+o(\varepsilon^2), 
\end{split}
\end{equation}
where the equality above is an easy computation of Taylor series. We do not detail its proof here.

To end up the proof of the first part of the lemma, it is now left to explain why holds
$$
\mathcal H^2(\mathcal C\cap V_1)=\mathcal H^2(\mathcal C\cap V_2).
$$
Indeed, this follows from the fact that the set $\mathcal C\cap V_2$ is defined in coordinates $(x,y)$ by the very same formula as the set $\mathcal C\cap V_1$ (we mean formula~\eqref{vosmiug1}).
Hence, the first statement of the lemma is proved. 

In order to prove the second statement of the lemma, i.e. the formula
$$
 \mathcal H^2(\mathcal C\cap W_0)=\frac{8}{4+3\sqrt{2}}=12\sqrt{2}-16,
$$
it suffices first to replace $\varepsilon$  by $0$ in calculations carried out in the proof of the first statement. This gives the inequality $\mathcal H^2(\mathcal C\cap W_0)\geq 12\sqrt{2}-16$. The reciprocal inequality is a simple exercise in planar geometry. It is left to the reader as an exercise.  Hence, the proof of the first lemma is finished.
\end{proof}

The next choices of planes are
$$
V_3:= \mathrm{span}\left(\frac{e_1-\varepsilon e_3}{\sqrt{1+\varepsilon^2}},\frac{e_2+\varepsilon e_4}{\sqrt{1+\varepsilon^2}}\right) \text{ and } \; V_4:= \mathrm{span}\left(\frac{e_1+\varepsilon e_3}{\sqrt{1+\varepsilon^2}},\frac{e_2-\varepsilon e_4}{\sqrt{1+\varepsilon^2}}\right).
$$
Let $(g_1,g_2)$ denote the following orthonormal basis of $V_1$
$$
g_1:=\frac{(e_1-\varepsilon e_3)}{\sqrt{1+\varepsilon^2}} \; \text{ and } \; g_2:=\frac{(e_2+\varepsilon e_4)}{\sqrt{1+\varepsilon^2}}.$$ As before, we calculate approximately the areas of intersections of the set $\mathcal C$ with the planes $V_3$ and $V_4$.

\begin{lemma}
\label{lem2}
The following asymptotic relations hold, provided that positive $\varepsilon$ is small enough
\begin{equation*}
   \mathcal H^2(\mathcal C\cap V_3)=\mathcal H^2(\mathcal C\cap V_4)\geq12\sqrt{2}-16 + (408\sqrt{2}-576)\varepsilon^2 + o(\varepsilon^2).
 \end{equation*}
\end{lemma}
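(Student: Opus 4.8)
The computation mirrors exactly the one in Lemma~\ref{lem1}, only with the roles of $\pm\varepsilon$ in the third and fourth coordinates reshuffled. First I would parametrize a point of $\mathcal C\cap V_3$ as
$$
\left(\frac{x}{\sqrt{1+\varepsilon^2}},\frac{y}{\sqrt{1+\varepsilon^2}},\frac{-\varepsilon x}{\sqrt{1+\varepsilon^2}},\frac{\varepsilon y}{\sqrt{1+\varepsilon^2}}\right),
$$
with $x,y\in\mathbb R$, and substitute into the defining inequality of $\mathcal C$. This produces a defining inequality for $\mathcal C\cap V_3$ in the $(x,y)$-plane of the same shape as~\eqref{vosmiug1}, but with the signs in front of $\varepsilon$ in the four absolute-value terms permuted; one should obtain something like
$$
\frac{|x(1-\varepsilon)|}{\sqrt2\sqrt{1+\varepsilon^2}}+\frac{|y(1-\varepsilon)|}{\sqrt2\sqrt{1+\varepsilon^2}}+\frac{|x(1+\varepsilon)-y(1+\varepsilon)|}{2\sqrt{1+\varepsilon^2}}+\frac{|x(1+\varepsilon)+y(1+\varepsilon)|}{2\sqrt{1+\varepsilon^2}}\le 1,
$$
(the precise coefficients to be read off the matrix $M$ and the definition of $\mathcal C$).

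Next, as in the previous lemma, I would exploit the symmetry of this region with respect to the two coordinate axes (and the line $x=y$, which now plays the role the line $x(1-\varepsilon)=y(1+\varepsilon)$ played before), reducing the area estimate to the first quadrant. I would inscribe in the first-quadrant piece the quadrilateral with vertices $(0,0)$, $(y_0',0)$, $(0,x_0')$ and the ``diagonal'' extreme point $(x_1',y_1')$, where $x_0',y_0'$ are found by intersecting the boundary with the axes and $(x_1',y_1')$ by intersecting it with the appropriate diagonal ray. Solving the three resulting linear (in the relevant unknown) equations gives closed-form expressions in $\varepsilon$, exactly parallel to the formulas for $x_0,y_0,x_1,y_1$ in Lemma~\ref{lem1}. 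The lower bound on the area is then $4\bigl(\tfrac{x_0'y_1'}{2}+\tfrac{x_1'y_0'}{2}\bigr)$, and a Taylor expansion to order $\varepsilon^2$ yields $12\sqrt2-16+(408\sqrt2-576)\varepsilon^2+o(\varepsilon^2)$. The equality $\mathcal H^2(\mathcal C\cap V_3)=\mathcal H^2(\mathcal C\cap V_4)$ follows, as before, because substituting the $V_4$-parametrization gives the same $(x,y)$-inequality (the sign flips cancel inside the absolute values).

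The only genuine obstacle is bookkeeping: one must get the signs in the four absolute-value terms right when substituting the $V_3$ (resp.\ $V_4$) parametrization into the formula for $\mathcal C$, and then correctly identify which of the candidate edges of the (octagonal) cross-section are actually active near $\varepsilon=0$, so that the inscribed quadrilateral really lies inside $\mathcal C\cap V_3$. Once the correct defining inequality and the correct diagonal direction are pinned down, the rest is the same Taylor-series computation as in Lemma~\ref{lem1}; the different coefficient $408\sqrt2-576$ (versus $272\sqrt2-384$) is precisely what reflects the different placement of the perturbation and is what will later be played off against Lemma~\ref{lem1} to reach the contradiction.
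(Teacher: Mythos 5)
Your plan is correct and follows the paper's own proof essentially step for step: the same parametrization of $\mathcal C\cap V_3$, the same defining inequality $\sqrt{2}(1-\varepsilon)(|x|+|y|)+(1+\varepsilon)(|x+y|+|x-y|)\le 2\sqrt{1+\varepsilon^2}$ (your displayed inequality is exactly this), the same inscribed quadrilateral with the diagonal extreme point now on $x=y$, and the same Taylor expansion producing $12\sqrt{2}-16+(408\sqrt{2}-576)\varepsilon^2+o(\varepsilon^2)$. The only small slip is your justification of $\mathcal H^2(\mathcal C\cap V_3)=\mathcal H^2(\mathcal C\cap V_4)$: substituting the $V_4$ parametrization does not give the same $(x,y)$-inequality but the $V_3$ inequality with $\varepsilon$ replaced by $-\varepsilon$; equality nevertheless holds because the rotation $(x,y)\mapsto\bigl((x+y)/\sqrt{2},(x-y)/\sqrt{2}\bigr)$ carries one octagon onto the other (and, in any case, the lower bound $8(1+\varepsilon^2)/\bigl((\sqrt{2}+1-\varepsilon+\sqrt{2}\varepsilon)(\sqrt{2}+2-2\varepsilon+\sqrt{2}\varepsilon)\bigr)$ is even in $\varepsilon$, so the asymptotics are unaffected).
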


\begin{proof} Note that a point that belongs to $V_3$ has the following coordinates in the basis $\mathcal B$
$$\biggl(\frac{x}{\sqrt{1+\varepsilon^2}},\frac{y}{\sqrt{1+\varepsilon^2}},\frac{-\varepsilon x}{\sqrt{1+\varepsilon^2}},\frac{\varepsilon y}{\sqrt{1+\varepsilon^2}}\biggr),$$
for some real numbers $x$ and $y$. Recalling the definition of $\mathcal C$, we see that  a point $xg_1+yg_2$ in $\mathcal C\cap V_3$ should fulfill the inequality
\begin{equation}
 \label{vosmiug2}
\sqrt{2}(1-\varepsilon)(|x|+|y|)+(1+\varepsilon)(|x+y|+|x-y|) \leq 2\sqrt{1+\varepsilon^2}.
\end{equation}
 Formula~\eqref{vosmiug2} defines the set $\mathcal C\cap V_3$ in the coordinates of the basis $(g_1,g_2)$. As in Lemma~\ref{lem1}, once $\varepsilon$ is small enough and positive, this set is an octagon, whose area will be denoted here $S_2$. It can be proved exactly as in the first lemma that this area is greater than or equal to four times the area of the quadrilateral with vertices at the origin and at the following points 
\begin{multline*}
\frac{\sqrt{2}\sqrt{1+\varepsilon^2}g_1}{\sqrt{2}+1-\varepsilon +\sqrt{2}\varepsilon}, \frac{\sqrt{2}\sqrt{1+\varepsilon^2}g_2}{\sqrt{2}+1-\varepsilon +\sqrt{2}\varepsilon}, \\
\frac{\sqrt{2}\sqrt{1+\varepsilon^2}g_1}{\sqrt{2}+2-2\varepsilon +\sqrt{2}\varepsilon}+\frac{\sqrt{2}\sqrt{1+\varepsilon^2}g_2}{\sqrt{2}+2-2\varepsilon +\sqrt{2}\varepsilon}.
\end{multline*}
These coordinates can be calculated exactly as in the proof of Lemma~\ref{lem1}. 

We further infer the estimates
\begin{align*}
 S_2 
&\geq8\biggl(\frac{1}{2}\cdot\frac{\sqrt{2}\sqrt{1+\varepsilon^2}}{\sqrt{2}+1-\varepsilon+\sqrt{2}\varepsilon}\cdot\frac{\sqrt{2}\sqrt{1+\varepsilon^2}}{\sqrt{2}+2-2\varepsilon+\sqrt{2}\varepsilon}\biggr) \\
 &=\frac{8(1+\varepsilon^2)}{(\sqrt{2}+1-\varepsilon+\sqrt{2}\varepsilon)(\sqrt{2}+2-2\varepsilon+\sqrt{2}\varepsilon)}\\
&=12\sqrt{2}-16+(408\sqrt{2}-576)\varepsilon^2+o(\varepsilon^2).
\end{align*}
These estimates are much the same as the bounds~\eqref{tvznak}. We leave them without
any comments. This together with the fact that the sets $\mathcal C\cap V_3$ and $\mathcal C\cap V_4$ obviously have the same area allow us to conclude that the second lemma is proved.
 \end{proof}

Next, armed with Lemmas~\ref{lem1} and~\ref{lem2}, we shall show that the two coefficients that are at the main diagonal of the $2\times2$ right up corner submatrix of the matrix~\eqref{proj} are equal to zero.

\begin{proop}
\label{prop1}
Let $\pi$ be a linear projection onto $\mathrm{span}(e_1,e_2)$ which is a contraction of the Hausdorff measure $\mathcal H^2_{\|\ldots\|}$. Then one necessarily has $a=d=0.$
\end{proop}
 \begin{proof} We claim that for any two dimensional Euclidean disc $A$ in $V_1$ holds
\begin{equation}
\label{lambda}
\mathcal H^2(\pi(A))=\lambda(V_1,W_0,\pi) \mathcal H^2(A),
\end{equation}
where $\lambda(V_1,W_0,\pi)$ is a positive coefficient, to be calculated in a moment. 

 Consider two auxiliary  linear mappings: the mapping $I$ defined by
 $$
\begin{array}{@{}c@{~}c@{~}c@{}}
I  : V_1 \;\;\;\;\;\; &\to  & \mathbb{R}^2 \\
\upin & & \upin \\
xf_1+yf_2 & \mapsto & (x,y)^t,
\end{array}
$$
where by $(x,y)^t$ we customly denote the transpose vector and the mapping $J$ defined by
 $$
\begin{array}{@{}c@{~}c@{~}c@{}}
J: \mathbb{R}^2 \;\;\;\;\;\; &\to  & W_0 \\
\upin & & \upin \\
(x,y)^t & \mapsto & xe_1+ye_2.
\end{array}
 $$

Introduce an auxiliary matrix
$$
N:=
\begin{pmatrix}
1+\frac{\varepsilon a}{\sqrt{1+\varepsilon^2}} & \frac{\varepsilon b}{\sqrt{1+\varepsilon^2}} \\
\frac{\varepsilon c}{\sqrt{1+\varepsilon^2}} & 1+\frac{\varepsilon d}{\sqrt{1+\varepsilon^2}} \\
\end{pmatrix}
,
$$
and the corresponding linear mapping $P:  \mathbb{R}^2 \to \mathbb{R}^2$ that sends a vector $\nu\in\mathbb R^2$ to the vector $N\nu\in \mathbb R^2$. It is easy to verify that for $v\in V_1$ holds $J(P(I(v)))=\pi(v).$ Since $J$ is an isometry, we infer the formula $$\mathcal H^2(\pi(A))=\mathcal H^2(P(I(A)).$$ The area of the linear mapping $P$ can be easily calculated explicitly. Thanks to this observation and  to the fact that $I$ is an isometry, we have that 
\begin{align*}
\mathcal H^2(\pi(A))&=|\mathrm{det}(N)|\mathcal H^2(A) \\
&=\left|\left(1+\frac{\varepsilon a}{\sqrt{1+\varepsilon^2}}\right)\cdot\left(1+\frac{\varepsilon d}{\sqrt{1+\varepsilon^2}}\right)-\frac{\varepsilon^2 bc}{1+\varepsilon^2}\right|\cdot\mathcal H^2(A),
\end{align*}
and our claim follows.

Therefore, taking into account the first lemma we see that line~\eqref{main} applied to $V_1$ reads as follows
$$
1+\frac{\varepsilon(a+d)}{\sqrt{1+\varepsilon^2}}+\frac{\varepsilon^2(ad-bc)}{1+\varepsilon^2}\leq\frac{12\sqrt{2}-16}{12\sqrt{2}-16+(272\sqrt{2}-304)\varepsilon^2+o(\varepsilon^2)}.
$$
 From here, we infer that for $\varepsilon$ small enough holds $a+d\leq \varepsilon c$, where $c$ is some real number. Letting $\varepsilon$ tend to $0$, we arrive at the inequality $a+d\leq 0$.

Further, we apply the same reasoning as that in the three paragraphs just above, but now to the plane $V_2$. This leads to the following chain of equalities and inequalities
\begin{align*}
&\det
\begin{pmatrix}
1-\frac{\varepsilon a}{\sqrt{1+\varepsilon^2}} & \frac{\varepsilon b}{\sqrt{1+\varepsilon^2}} \\
\frac{\varepsilon c}{\sqrt{1+\varepsilon^2}} & 1-\frac{\varepsilon d}{\sqrt{1+\varepsilon^2}} \\
\end{pmatrix}\\
&= \left(1-\frac{\varepsilon a}{\sqrt{1+\varepsilon^2}}\right)\cdot\left(1-\frac{\varepsilon d}{\sqrt{1+\varepsilon^2}}\right)-\frac{\varepsilon^2 bc}{1+\varepsilon^2} \\
&\leq \frac{12\sqrt{2}-16}{12\sqrt{2}-16 + (408\sqrt{2}-576)\varepsilon^2 +o(\varepsilon^2)}.
\end{align*}
Letting $\varepsilon$ tend to zero, we conclude that $-(a+d)\leq 0$ and in turn that $a+d=0$. 

Finally, armed with Lemma~\ref{lem2}, we are now able to apply the same reasoning as in the previous part of the proof of Proposition~\ref{prop1}, but now to the planes $V_3$ and $V_4$. This leads to the inequalities $a-d\leq 0$ and $-(a-d)\leq 0$ respectively. 

Hence, $a=d=0$ and the current proposition is thus proved.
\end{proof}

Further four choices of planes are like this:
\begin{align*}
V_5&:= \mathrm{span}\left(\frac{e_1+\varepsilon e_4}{\sqrt{1+\varepsilon^2}},\frac{e_2+\varepsilon e_3}{\sqrt{1+\varepsilon^2}}\right), \; \; V_6:= \mathrm{span}\left(\frac{e_1-\varepsilon e_4}{\sqrt{1+\varepsilon^2}},\frac{e_2-\varepsilon e_3}{\sqrt{1+\varepsilon^2}}\right), \\
V_7&:= \mathrm{span}\left(\frac{e_1-\varepsilon e_4}{\sqrt{1+\varepsilon^2}},\frac{e_2+\varepsilon e_3}{\sqrt{1+\varepsilon^2}}\right), \; \; V_8:= \mathrm{span}\left(\frac{e_1+\varepsilon e_4}{\sqrt{1+\varepsilon^2}},\frac{e_2-\varepsilon e_3}{\sqrt{1+\varepsilon^2}}\right).
\end{align*}
Acting as before, one can obtain approximate values of the area of the intersections of the set $\mathcal C$ with the planes $V_5,\ldots,V_8.$

 \begin{lemma}
There exist two positive absolute constants $c_1$ and $c_2$ such that following asymptotic relations hold, provided that positive $\varepsilon$ is small enough
\begin{align*}
    \mathcal H^2(\mathcal C\cap V_5)&=\mathcal H^2(\mathcal C\cap V_6)\geq12\sqrt{2}-16 + c_1\varepsilon^2 + o(\varepsilon^2)\\
    \mathcal H^2(\mathcal C\cap V_7)&=\mathcal H^2(\mathcal C\cap V_8)\geq12\sqrt{2}-16 + c_2\varepsilon^2 + o(\varepsilon^2).
   \end{align*}
\end{lemma}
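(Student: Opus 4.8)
The plan is to follow, almost word for word, the scheme of the proofs of Lemmas~\ref{lem1} and~\ref{lem2}. For each $j\in\{5,6,7,8\}$ one writes a generic point of $\mathcal C\cap V_j$ in the orthonormal coordinates $(x,y)$ of $V_j$ and plugs it into the defining inequality of $\mathcal C$. For instance, a point of $V_5$ is $\tfrac1{\sqrt{1+\varepsilon^2}}(x,y,\varepsilon y,\varepsilon x)$, so that $\mathcal C\cap V_5$ is the set of $(x,y)\in\mathbb R^2$ with
\begin{equation*}
\frac{|x+\varepsilon y|+|y-\varepsilon x|}{\sqrt2}+\frac{|x(1-\varepsilon)-y(1+\varepsilon)|+|x(1+\varepsilon)+y(1-\varepsilon)|}{2}\le\sqrt{1+\varepsilon^2},
\end{equation*}
while $\mathcal C\cap V_7$ is the set of $(x,y)$ with
\begin{equation*}
\frac{|x+\varepsilon y|+|y+\varepsilon x|}{\sqrt2}+\frac{(1+\varepsilon)|x-y|+(1-\varepsilon)|x+y|}{2}\le\sqrt{1+\varepsilon^2}.
\end{equation*}
Exactly as before, for small $\varepsilon$ these are octagons, and it suffices to bound their areas from below by the areas of inscribed polygons whose vertices are the origin together with the intersections of the boundary with the coordinate half-axes and with one diagonal on which a term of the left-hand side vanishes.

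For $V_5$ (and, mutatis mutandis, $V_6$) I would use that the region above is invariant under the rotation $(x,y)\mapsto(y,-x)$; hence its area is four times the area of the quadrilateral with vertices $(0,0)$, $(x_*,0)$, $(x_1,y_1)$, $(0,x_*)$, where $x_*=\tfrac{\sqrt2\sqrt{1+\varepsilon^2}}{1+\sqrt2+\varepsilon}$ is the positive $x$-intercept (equal, by the rotational symmetry, to the positive $y$-intercept) and $(x_1,y_1)$, with $x_1(1-\varepsilon)=y_1(1+\varepsilon)$, is the intersection with the diagonal $x(1-\varepsilon)=y(1+\varepsilon)$ in the first quadrant, obtained by solving a single linear equation. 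This produces a closed-form lower bound $L_5(\varepsilon)$, and a direct check gives $L_5(0)=\tfrac{4\sqrt2}{(1+\sqrt2)^2}=12\sqrt2-16$, matching the value $\mathcal H^2(\mathcal C\cap W_0)$ of Lemma~\ref{lem1}. For $V_7$ (and likewise $V_8$) the region is symmetric in $x\leftrightarrow y$ and under $(x,y)\mapsto(-x,-y)$; I would inscribe, in its intersection with the half-plane $\{y\ge 0\}$, the pentagon with vertices $(x_*,0)$, $(x_d,x_d)$, $(0,x_*)$, $(-x_d',x_d')$, $(-x_*,0)$, where $x_d$, $x_d'$ are the intercepts with the half-lines $y=x$ and $y=-x$, so that the octagon's area is at least twice that of this pentagon; again one gets a closed form $L_7(\varepsilon)$ with $L_7(0)=12\sqrt2-16$.

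It remains to establish the two equalities and the vanishing of the linear term in $\varepsilon$. The plane $V_6$ is obtained from $V_5$ by $\varepsilon\mapsto-\varepsilon$, and the area-preserving map $(x,y)\mapsto(-x,y)$ carries the inequality describing $\mathcal C\cap V_6$ onto the one describing $\mathcal C\cap V_5$; hence $\mathcal H^2(\mathcal C\cap V_5)=\mathcal H^2(\mathcal C\cap V_6)$, and the same device (with an appropriate reflection) gives $\mathcal H^2(\mathcal C\cap V_7)=\mathcal H^2(\mathcal C\cap V_8)$. In particular $\varepsilon\mapsto\mathcal H^2(\mathcal C\cap V_5)$ is even, so it is bounded below not only by $L_5(\varepsilon)$ but also by $L_5(-\varepsilon)$, hence by their average $\tfrac12\bigl(L_5(\varepsilon)+L_5(-\varepsilon)\bigr)$; Taylor-expanding this even rational function yields $12\sqrt2-16+c_1\varepsilon^2+o(\varepsilon^2)$, and similarly a constant $c_2$ for $V_7$. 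A final computation with the explicit expressions shows that both constants are positive; concretely, one may take $c_1=148\sqrt2-208>0$ and $c_2=476\sqrt2-672>0$.

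The only genuine work here is the bookkeeping: deciding which sign-branch of each absolute value is active in the relevant sector, so that the extreme points come out right, then solving the resulting linear equations and carrying out the Taylor expansions. The two points deserving an explicit word are that the inscribed polygons do lie inside $\mathcal C\cap V_j$, which is immediate from the convexity of $\mathcal C\cap V_j$ once their vertices are known to lie on the boundary, and that the parity argument is what upgrades a lower bound which is a priori not even in $\varepsilon$ to an even one, so that the correction genuinely begins at order $\varepsilon^2$. I do not anticipate any conceptual difficulty, only length; the positivity of $c_1$ and $c_2$ — which is precisely what the lemma asserts, and what will be used via~\eqref{main}, exactly as in Proposition~\ref{prop1}, to force $b=c=0$ in the matrix~\eqref{proj} — drops out of these final expansions.
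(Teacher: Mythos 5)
Your proposal is correct and follows essentially the route the paper intends, since the paper's own ``proof'' of this lemma is just the remark that one argues as in Lemmas~\ref{lem1} and~\ref{lem2}: you set up the defining inequalities for $\mathcal C\cap V_j$ correctly, and your inscribed-polygon bounds and constants (I recover $c_1=(12\sqrt2-16)(7-4\sqrt2)=148\sqrt2-208$ and $c_2=(12\sqrt2-16)(21-14\sqrt2)=476\sqrt2-672$) check out. Your symmetrization in $\varepsilon$ is in fact a necessary extra step here (unlike for $V_1,\dots,V_4$, the naive inscribed-polygon bound for $V_5,\dots,V_8$ has a nonvanishing linear term because the intercept $x_*$ depends on $\varepsilon$ to first order), and you handle it correctly via the equalities $\mathcal H^2(\mathcal C\cap V_5)=\mathcal H^2(\mathcal C\cap V_6)$ and $\mathcal H^2(\mathcal C\cap V_7)=\mathcal H^2(\mathcal C\cap V_8)$.
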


\begin{proof} The proof of this statement is similar to that of Lemma~\ref{lem1} and Lemma~\ref{lem2}, so it is not detailed here.
\end{proof}

With the help of the third lemma, we are now in position to show that all coefficients of the $2\times2$ right up corner submatrix of the matrix~\eqref{proj} are equal to zero.

\begin{proop}
\label{prop2}
Let $\pi$ be a linear projection defined onto $\mathrm{span}(e_1,e_2)$ which is a contraction of the Hausdorff measure $\mathcal H^2_{\|\ldots\|}$. Then one necessarily has $a=b=c=d=0.$
\end{proop}

\begin{proof} By Proposition~\ref{prop1} we have that $a=d=0$, and the proof of the fact that $b=c=0$ is similar to that of Proposition~\ref{prop1}, so it is not detailed here and is left to the reader as an easy exercise.
\end{proof}

\begin{rem}
Note that Propositions~\ref{prop1} and~\ref{prop2} show that the projection $\pi$ must be orthogonal.
\end{rem}

How to conclude then as for the proof of  Theorem~\ref{one} ? Consider the final special choice of a plane. Put 
$$
V_9:=\mathrm{span}\biggl(\biggl(\frac{1}{\sqrt{2}},0,\frac{1}{\sqrt{2}},0\biggr),\biggl(0,\frac{1}{\sqrt{2}},0,-\frac{1}{\sqrt{2}}\biggr)\biggr).
$$
We claim that $\mathcal H^2(\mathcal C\cap V_9)=2$. Indeed, this results from the following two observations 
$$f(e_1)=\biggl(\frac{1}{\sqrt{2}},0,\frac{1}{\sqrt{2}},0\biggr) \; \text{ and } \; f(e_2)=\biggl(0,\frac{1}{\sqrt{2}},0,-\frac{1}{\sqrt{2}}\biggr)$$
and from the fact that obviously $\mathcal H^2(\mathcal O\cap \mathrm{span}(e_1,e_2))=2$. 

Propositions~\ref{prop1} and~\ref{prop2} tell us that the projection $\pi$ is defined by the following matrix
\begin{equation*}
\begin{pmatrix}
1 & 0 & 0 & 0 \\
0 & 1 & 0 & 0 \\
0 & 0 & 0 & 0 \\
0 & 0 & 0 & 0
\end{pmatrix}
.
\end{equation*} 
From here, acting as in the proof of formula~\eqref{lambda} one can easily deduce that for any two dimensional Euclidean disc $A$ in $V_9$ holds
$$\mathcal H^2(\pi(A))= \lambda(V_9,W_0,\pi) \mathcal H^2(A),$$
where
$$
\lambda(V_9,W_0,\pi)=
\det
\begin{pmatrix}
\frac{1}{\sqrt{2}} & 0 \\
0 & \frac{1}{\sqrt{2}} \\
\end{pmatrix}
=\frac{1}{2}
.
$$

Inequality~\eqref{main} applied to $V_9$ thus implies $1/2\leq (12\sqrt{2}-16)/2,$
which is a contradiction, since an easy computation shows that $(12\sqrt{2}-16)/2<0.49$. Hence, Theorem~\ref{one}  is proved.

\end{proof}

\begin{rem}
It is interesting to mention that the plane $\mathrm{span}(e_1,e_2)$ is the section of the least two dimensional area of the set $\mathcal C$. Here, a ``section'' means an ``intersection with a two dimensional linear subspace''.  This has been  recently proved in~\cite{nazar} by elementary methods. It is plausible that the calculations, carried out in our proof above, namely in the proofs of our three lemmas, might provide an alternative proof of this fact. 
\end{rem}

\section{Extendible convexity in codimension two case in complex convex spaces}

Let us now show how to deduce Theorem~\ref{two} from Theorem~\ref{m=2} and a Busemann type result from paper~\cite{kold}.

\begin{proof}
So, let $Y$ be a complex normed vector space of dimension $2k$ with an integer $k>1$ and let $\mathbb B$ denote the unit ball of the space $Y$. Fix $\omega\in G(2k-2,X)$ and recall that $\star:\Lambda_2(Y)\rightarrow \Lambda_{2k-2}(Y)$ denotes the Hodge star operator (see e.g. in~\cite{flan} for a definition). According to~\cite{kold}, Theorem 7.1, there exists a convex symmetric body $K\subset Y$ such that for all $\omega\in G(2k-2,Y)$ holds
$$
\mathcal H^{2k-2}(\mathbb B\cap \mathrm{span}(\omega))=\mathcal H^{2}(K\cap \mathrm{span}(\star\omega)).
$$
 Thanks to the last line above we have
\begin{align*}
	\phi_{2k-2,BH,\mathbb B}(\omega)&=\frac{\mathbold{\alpha}(2k-2)|\omega|_2}{\mathcal H^{2k-2}(\mathbb B\cap \mathrm{span}(\omega))}\\
	&=\frac{\mathbold{\alpha}(2k-2)|\omega|_2}{\mathcal H^{2}(K\cap \mathrm{span}(\star\omega))}\\
	&=\sqrt{\frac{2!}{(2k-2)!}}\frac{\mathbold{\alpha}(2k-2)|\mathord\star\omega|_2}{\mathcal H^{2}(K\cap \mathrm{span}(\star\omega))}\\
	&=\sqrt{\frac{2!}{(2k-2)!}}\frac{\mathbold{\alpha}(2k-2)}{\mathbold{\alpha}(2)}\phi_{2,BH,K}(\star\omega),
	\end{align*}
	where the penultimate equality above is an easy consequence of Example 1.12 in~\cite{mark}.

The function $\phi_{2,BH,K}$  is extendibly convex thanks to Theorem~\ref{m=2}. As a consequence of the linearity of the Hodge star, we conclude that the function $\phi_{2k-2,BH,\mathbb B}$ is extendibly convex.
\end{proof}

\bigskip

Let us now prove Theorem~\ref{trii}. We would like to stress that we adopt notations of the article~\cite{ccepi}.

\begin{proof}
 As in Theorem~\ref{two},  we let $\mathbb B$ denote the unit ball of the space $Y$. According to Theorems 4.3  and 3.1 in~\cite{ccepi}, it suffices to prove the following triangular inequality for cycles.
\begin{proop}
\label{triangle}
For every integer $\kappa\geq 2$, every oriented simplexes $\sigma_1,\ldots,\sigma_\kappa$ of dimension $2k-2$ in $Y$ and every $g_1,\ldots,g_\kappa \in G$, if $\partial \sum_{i=1}^\kappa g_i [\sigma_i]=0$, then
$$
\mathit{M}_H (g_1 [\sigma_1]) \leq \sum_{i=2}^\kappa \mathit{M}_H (g_i [\sigma_i]).
$$
\end{proop}
\begin{proof}
We know that each $\sigma_i$ belongs to some affine $2k-2$ dimensional subspace in $Y$, i.e. that for all natural $i$ from $1$ to $\kappa$ holds $\sigma_i\subset \mathrm{span}(\omega_i)+h_i$ for some decomposable $2k-2$ vectors $\omega_i$ of unit norm  and some  $h_i\in Y$. Taking into account the definition of the Hausdorff mass (see~\cite{ccepi}) and the Busemann theorem (see~\cite{thompson}, Theorem 7.3.5) we see that what we need to prove is the following inequality
\begin{equation}
\label{hodge-1}
|g_1|\frac{\mathcal H^{2k-2}(\sigma_1)}{\mathcal H^{2k-2}(\mathbb B\cap \mathrm{span}(\omega_1))}\leq \sum_{i=2}^{\kappa}|g_i|\frac{\mathcal H^{2k-2}(\sigma_i)}{\mathcal H^{2k-2}(\mathbb B\cap \mathrm{span}(\omega_i))}.
\end{equation}

Let $K$ be as in the proof Theorem~\ref{two}. We apply Theorem 7.1 from~\cite{kold} to find that
\begin{equation}
\label{hodge-0.5}
\sum_{i=2}^{\kappa}|g_i|\frac{\mathcal H^{2k-2}(\sigma_i)}{\mathcal H^{2k-2}(\mathbb B\cap \mathrm{span}(\omega_i))}=\sum_{i=2}^{\kappa}|g_i|\frac{\mathcal H^{2k-2}(\sigma_i)}{\mathcal H^{2}(K\cap \mathrm{span}(\mathord\star\omega_i))}.
\end{equation}
Note that by Theorem 1.8 in~\cite{mark}, each  $\mathord\star\omega_i$ is a decomposable two vector. Choose an orthonormal basis $(v_{1,i},v_{2,i})$ of the plane $\mathrm{span}(\mathord\star\omega_i)$.

Fix $\epsilon>0$ and approximate  using Lemma 2.3.2 in~\cite{her} the convex symmetric body $K$ by a convex symmetric polyhedron $K_\epsilon$, which is the unit ball of some crystalline norm in $Y$ in a way that
$$
\max_{i\in [1,\ldots,\kappa]}|\mathcal H^2(K_\epsilon\cap \mathrm{span}(\omega_i))-\mathcal H^2(K\cap \mathrm{span}(\omega_i))|<\epsilon.
$$
As a consequence, we have
\begin{equation}
\label{hodge-0.01}
\sum_{i=2}^{\kappa}|g_i|\frac{\mathcal H^{2k-2}(\sigma_i)}{\mathcal H^{2}(K\cap \mathrm{span}(\mathord\star\omega_i))}\geq \sum_{i=2}^{\kappa}|g_i|\frac{\mathcal H^{2k-2}(\sigma_i)}{\mathcal H^{2}(K_\epsilon\cap \mathrm{span}(\mathord\star\omega_i))} -\widehat{\epsilon},
\end{equation}
where $\widehat{\epsilon}>0$ tends to zero, when $\epsilon$ tends to zero.

 Taking into account Theorem 5.6 in~\cite{ccepi}, we see that 
\begin{equation}
	\begin{split}
\label{hodge-0.1}
&\sum_{i=2}^{\kappa}|g_i|\frac{\mathcal H^{2k-2}(\sigma_i)}{\mathcal H^{2}(K_\epsilon\cap \mathrm{span}(\mathord\star\omega_i))}\\
&\geq \sum_{i=2}^{\kappa}|g_i|\mathcal H^{2k-2}(\sigma_i)\sum_{1\leq r<l\leq p} \lambda_r \lambda_l|\langle \alpha_r\wedge \alpha_l,v_{1,i}\wedge v_{2,i} \rangle|,
\end{split}
\end{equation}
for certain natural $p$, real $\{\lambda_r\}_{r=1}^p$ and $\alpha_r\in Y$ for all natural $r$ from $1$ to $p$. Based on Example 1.12 in~\cite{mark} we deduce that 
\begin{equation} 
	\label{hodge0}
	\begin{split}
&\sum_{i=2}^{\kappa}|g_i|\mathcal H^{2k-2}(\sigma_i)\sum_{1\leq r<l\leq p} \lambda_r \lambda_l|\langle \alpha_r\wedge \alpha_l, v_{1,i}\wedge v_{2,i} \rangle|\\
&=\sum_{i=2}^{\kappa}|g_i|\mathcal H^{2k-2}(\sigma_i)\frac{2!}{(2k-2)!}\sum_{1\leq r<l\leq p} \lambda_r \lambda_l|\langle \mathord\star (\alpha_r\wedge \alpha_l),\omega_i \rangle|\\
&=\sum_{1\leq r<l\leq p} \lambda_r \lambda_l\frac{2!}{(2k-2)!}\sum_{i=2}^{\kappa}|g_i|\mathcal H^{2k-2}(\pi_{r,l}\sigma_i),
\end{split}
\end{equation}
where $\pi_{r,l}$ are certain linear projections from $Y$ onto $\mathrm{span}(\omega_1)$, very much the same  as those in the proof of Theorem 5.9 in~\cite{ccepi}. Using the Euclidean triangular inequality for cycles  (see e.g.~\cite{ccepi}, Theorem 5.3) and once again Example 1.12 in~\cite{mark} we infer that
\begin{equation}
\label{hodge1}
\begin{split}
\sum_{1\leq r<l\leq p}&\lambda_r \lambda_l\frac{2!}{(2k-2)!}\sum_{i=2}^{\kappa}|g_i|\mathcal H^{2k-2}(\pi_{r,l}\sigma_i) \\
&\geq\sum_{1\leq r<l\leq p}\lambda_r \lambda_l\frac{2!}{(2k-2)!}|g_1| \mathcal H^{2k-2}(\pi_{r,l}\sigma_1)\\
&= |g_1| \mathcal H^{2k-2}(\sigma_1)\sum_{1\leq r<l\leq p} \lambda_r\lambda_l |\langle  \alpha_r\wedge \alpha_l, v_{1,1}\wedge v_{2,1} \rangle|.
\end{split}
\end{equation}

By virtue of the equality case in Theorem 5.9 from~\cite{ccepi} and of Theorem 7.1 from~\cite{kold} we conclude that
\begin{equation} 
	\label{hodge2}
	\begin{split}
&|g_1| \mathcal H^{2k-2}(\sigma_1)\sum_{1\leq r<l\leq p} \lambda_r\lambda_l |\langle  \alpha_r\wedge \alpha_l, v_{1,1}\wedge v_{2,1} \rangle|\\
&=|g_1| \frac{\mathcal H^{2k-2}(\sigma_1)}{\mathcal H^{2}(K_\epsilon\cap \mathrm{span}(\mathord\star\omega_1))}\\
&\geq |g_1| \frac{\mathcal H^{2k-2}(\sigma_1)}{\mathcal H^{2}(K\cap \mathrm{span}(\mathord\star\omega_1))}-\widetilde{\epsilon}\\
&=|g_1| \frac{\mathcal H^{2k-2}(\sigma_1)}{\mathcal H^{2k-2}(\mathbb B\cap \mathrm{span}(\omega_1))}-\widetilde{\epsilon},
\end{split}
\end{equation}
where $\widetilde{\epsilon}$ tends to zero, when $\epsilon$ tends to zero.
Here above, in the last equality we have used Theorem 7.1 in~\cite{kold} once again. 

Getting the inequalities~\eqref{hodge-0.5},~\eqref{hodge-0.01},~\eqref{hodge-0.1},~\eqref{hodge0},~\eqref{hodge1} and~\eqref{hodge2} together and letting $\epsilon$ tend to zero yields the triangle inequality~\eqref{hodge-1}.
\end{proof}

Thus, Theorem~\ref{trii} follows from Proposition~\ref{triangle}, by virtue of Theorem 4.3  and Theorem 3.1 in~\cite{ccepi}.
\end{proof}

\section{Appendix}

Let us now give for each natural $n> 4$ an example of an $n$ dimensional   normed vector space $Y$ in which the two dimensional $BH$ density is not totally convex. To this end, recall the set $\mathcal C$ (see formula~\eqref{osnovnoemnozhestvo}) and let $\mathrm{C}$ denote the body $\mathcal C \times B_2^{n-4}$. Here, $\times$ stands for the Cartesian product and $B_2^{n-4}$ for the Euclidean unit ball of dimension $n-4$. Consider the $n$ dimensional normed vector space 
$Y$, whose norm $\pmb{|}\ldots\pmb{|}$ is defined by the Minkowski functional of $\mathrm{C}$. 
\begin{proop}
In the space $Y$, there is no linear contraction of the Hausdorff measure $\mathcal H^2_{\pmb{|}\ldots\pmb{|}}$ onto the two dimensional plane $\mathrm{span}(e_1,e_2)$.
\end{proop}
\begin{proof}
Suppose the controrary, i.e. suppose that in this space 
there was a linear projection onto the plane $\mathrm{span}(e_1,e_2)$ which is a contraction of the Hausdorff measure $\mathcal H^2_{\pmb{|}\ldots\pmb{|}}$. 
 Denote this projection $\Pi$. Let $X$ and $W_0$ be as in the proof of Theorem~\ref{one}.  Thus, any plane $V$ in $X$ and for any two dimensional Euclidean disc $A$ in $V$  would satisfy
\begin{equation*}
\mathcal H^2_{\pmb{|}\ldots\pmb{|}}(\Pi(A))\leq \mathcal H^2_{\pmb{|}\ldots\pmb{|}}(A).
\end{equation*}

But then, by the construction of the norm $\pmb{|}\ldots\pmb{|}$ we would also have the following inequality
\begin{equation*}
\mathcal H^2_{\|\ldots\|}(\Pi\restr{W_0}(A))\leq \mathcal H^2_{\|\ldots\|}(A),
\end{equation*}
which contradicts Theorem~\ref{one}.
\end{proof}
\begin{rem}
It is plausible that this construction allows to generalize --- once it is constructed --- a counterexample to Problem~\ref{problem} to all higher dimensions and codimensions.
\end{rem}

%
%

\renewcommand{\refname}{References}

\end{document}